\documentclass[12pt,a4paper,final,reqno]{amsart}
\usepackage{amsmath}
\usepackage{amsthm}
\usepackage{amssymb}
\usepackage{graphicx}
\usepackage{mathrsfs}
\usepackage[english, portuguese]{babel}
\usepackage[latin1]{inputenc}
\usepackage[T1]{fontenc}
\usepackage{amsrefs}
\usepackage[usenames,dvipsnames]{xcolor}

\def\real{\mathbb{R}}

\renewcommand{\d}{\mathrm d}



\def\real{{\mathbb R}}

\def\i{{\mathfrak{i}}}


\renewcommand{\d}{d}


\newcommand{\Poi}{{\rm Pois}}
\newcommand{\Pn}{{\rm Pn}}
\newcommand{\Sp}{{\rm Sp}}

\newcommand{\SL}{{\rm SL}}
\newcommand{\GL}{{\rm GL}}
\newcommand{\SO}{{\rm SO}}
\newcommand{\Rot}{{\rm Rot}}

\newcommand{\id}{{\rm id}}

\newcommand{\Jj}{\mathbb J}
\newcommand{\Nn}{\mathbb N}
\newcommand{\Rr}{\mathbb R}

\newcommand{\EE}{\mathcal E}

\newcommand{\comment}[1]{}

\numberwithin{equation}{section}

\newtheorem{theorem}{Theorem}[section]
\newtheorem{lemma}[theorem]{Lemma}

\newtheorem{proposition}[theorem]{Proposition}

\theoremstyle{remark}
\newtheorem{remark}[theorem]{Remark}

\theoremstyle{definition}






\setlength{\topmargin}{0in}
\setlength{\textheight}{8.5in}
\setlength{\textwidth}{6in}
\setlength{\oddsidemargin}{0in}
\setlength{\evensidemargin}{0in}


\begin{document}
\selectlanguage{english}

\title[]{Realization of tangent perturbations in discrete and continuous time conservative systems}

\author[Alishah]{Hassan Najafi Alishah}
\address{CMAF \\
Faculdade de Ci\^encias\\
Universidade de Lisboa\\
Campo Grande, Edif\'icio C6, Piso 2\\
1749-016 Lisboa, Portugal}
\email{najafi.alishah@gmail.com}

\author[Lopes Dias]{Jo\~ao Lopes Dias}
\address{Departamento de Matem\'atica and CEMAPRE, ISEG\\
Universidade de Lisboa\\
Rua do Quelhas 6, 1200-781 Lisboa, Portugal}
\email{jldias@iseg.utl.pt}

\begin{abstract}
We prove that any perturbation of the symplectic part of the derivative of a Poisson diffeomorphism can be realized as the derivative of a $C^1$-close Poisson diffeomorphism.
We also show that a similar property holds for the Poincar\'e map of a Hamiltonian on a Poisson manifold.
These results are the conservative counterparts of the Franks lemma, a perturbation tool used in several contexts most notably in the theory of smooth dynamical systems.

\end{abstract}

\maketitle


\section{Introduction}\label{sec:introduction}

The so-called Franks lemma~\cite[Lemma 1.1]{Franks} states that any perturbation of the derivative of a diffeomorphism at a finite set can be realized as the derivative of a nearby diffeomorphism in the $C^1$ topology.
This perturbative result has been crucially used to produce dynamical results out of related properties for linear systems.
Many different dynamical behaviors can then be proved to hold in dense or even residual sets of diffeomorphisms. 
In the case of flows similar perturbation techniques are contained in~\cite{bgv2006,mpp2004}.

Based on the usefulness of the Franks lemma, it is natural to ask if it still holds by restricting its focus to certain subgroups of diffeomorphisms. 
In the volume-preserving context Bonatti, D\'\i az and Pujals~\cite[Proposition 7.4]{bdp2003} proved a version of the Franks lemma for diffeomorphisms and Bessa and Rocha~\cite[Lemma 3.2]{BR2008} for flows.
In the symplectic case some authors have stated and used it (see e.g.~\cite[Lemma 12]{arnaud} and~\cite[Lemma 5.1]{HA2006}), but up to our knowledge no proof is available in the literature.

In this paper we present a complete proof of the symplectic perturbative result version, as a particular case of a slightly more general setting concerning Poisson diffeomorphisms.
More specifically, we show that a perturbation of the symplectic part of the derivative of a Poisson diffeomorphism at a point $p$ can be realized as the derivative of a nearby Poisson diffeomorphism which differs from the original map only at a small neighborhood of $p$.

We also show the similar result for general Hamiltonians in Poisson manifolds as a simple application of the ideas used for symplectomorphisms (this simplifies considerably the methods described in the manuscript~\cite{vivier}).
In fact, we show that a linear perturbation of the derivative of the Poincar\'e map is realizable as the derivative of the Poincar\'e map of a nearby Hamiltonian.
When considering geodesic flows see~\cite{ContrerasPaternain2002} for surfaces and~\cite{Contreras2010} for a higher dimensional manifold (see also~\cite{visscher2012}).

The fact that every Poisson diffeomorphism has to preserve the symplectic foliation of a Poisson manifold is an obstruction to state a general Franks lemma for Poisson maps. 
However, the type of perturbation we study here includes time one maps of Hamiltonian flows on Poisson manifolds. 
The Hamiltonian dynamical systems on Poisson manifolds, sometimes referred as generalized Hamiltonian systems, arise naturally in problems of celestial mechanics, mean field theory, ecology populations, among many others (cf. e.g. \cite{MR1643501,MR1672182,MR660413} and references therein).

\subsection{Organization of the paper} 

In section~\ref{sec:results} we provide basic definitions on Poisson manifolds and state our main results, Theorems~\ref{thm:Frankspo} and ~\ref{thm:Franks_symp}, on Poisson diffeomorphisms and the Poincar\'e map of Hamiltonians, respectively. 
Section~\ref{sec:prelims} contains the key technical lemma regarding the Hamiltonian function that we will use to achieve perturbations for the special case of rotations. 
The linear symplectic geometry techniques to reduce a general case to rotations will be provided in the last part of section~\ref{sec:prelims}.
We will prove Theorem~\ref{thm:Frankspo} in section~\ref{sec:proof1}. 
At the beginning of section~\ref{sec:Hamiltonian} we will show Theorem~\ref{thm:Franks_symp}, first in a simpler case and later, through a Poisson flowbox theorem (Theorem~\ref{thm:flowbox}), for the general setting.

\section{Statement of results}\label{sec:results}

A Poisson manifold is a pair $(M,\pi)$ where $M$ is a manifold and $\pi$ a Poisson structure on $M$.
Recall that a Poisson structure is a bivector field $\pi\in\mathfrak{X}_2(M)$ with the property that $[\pi,\pi]=0$, where $[\cdot, \cdot]$ is the Schouten bracket  (cf. e.g.~\cite{poissonbook,MR1240056}).
The bivector field $\pi$ provides a vector bundle map $\sharp_\pi\colon T^\ast  M\rightarrow TM$. 
The image of this map is an integrable singular distribution which integrates to a symplectic foliation, i.e. a foliation whose leaves have a symplectic structure induced by the Poisson structure. 
The rank of Poisson structure at $p\in M$ is half of the dimension of the symplectic leaf passing through $p$.

Notice that a Poisson structure can be also defined as a Lie bracket $\{\cdot,\cdot\}$ on $C^\infty(M)\times C^\infty(M)$ satisfying the Leibniz identity
$$
\{\psi,\phi\, \eta\}=\{\psi,\phi\}\eta+\phi\{\psi,\eta\},
\qquad 
\psi,\phi,\eta\in C^\infty(M).
$$
The two above descriptions are related by $\pi(\d \psi,\d \phi)=\{\psi,\phi\}$.

The set $\Poi^1(M)$ of Poisson diffeomorphisms consists of $C^1$-diffeomorphisms $f\colon M\to M$ satisfying $f_\ast\pi=\pi$, where 
$$
f_\ast\pi(\xi,\eta)=\pi(f^\ast\xi,f^\ast\eta),
\qquad 
\xi,\eta\in T^\ast M.
$$
For the Poisson bracket description, $f$ is Poisson iff $\{\psi\circ f,\phi\circ f\}=\{\psi,\phi\}$ for every $\psi,\phi \in C^\infty(M)$.

A regular Poisson manifold is a Poisson manifold with constant rank. 
We now restrict our attention to regular Poisson manifolds $(M,\pi)$ with rank $d$ and dimension $2d+n$.
By the splitting theorem, the Poisson version of the Darboux theorem (see e.g.~\cite{poissonbook}), there is an atlas $\{(U_i,\varphi_i)\}_{i\in\Nn}$, such that 
$(\varphi_i)_\ast\pi=\pi_0$, where
$$
\pi_0=\sum_{i=1}^d\frac{\partial}{\partial x_i}\wedge\frac{\partial}{\partial y_i}
$$
is the canonical Poisson structure.
Here, $(x_1,..,x_d,y_1,...,y_d,z_1,..,z_n)$ stands for the coordinates of $\Rr^{2d}\times\Rr^n$. 
We will always use local coordinates given by the splitting theorem so that the derivative of $f\in \Poi^1(M)$ at $p\in M$, $D_pf=D_0 (\varphi_j\circ  f\circ\varphi_i^{-1})$, belongs to the Poisson linear group given by
$$
\Pn(2d+n,\Rr)=\{B\in\GL(2d+n)\colon B^T\hat\Jj B=\hat\Jj\},
$$
with
$$
\hat\Jj=
\begin{pmatrix}\Jj & 0\\
0&0	\end{pmatrix},
\qquad
\Jj=\begin{pmatrix} 0&-I\\
I&0\end{pmatrix}
$$
and the $d\times d$ identity matrix $I$.
Note that the elements of $\Pn(2d+n,\Rr)$ are of the form
$$
\begin{pmatrix}A&a\\0&b\end{pmatrix}
=\begin{pmatrix}I& a\\0&b\end{pmatrix}\,A_\pi ,
$$
where $A\in \Sp(2d,\real)$ is a symplectic matrix, i.e. $A\in\GL(2d,\Rr)$ such that $A^T\Jj A=\Jj$, $a$ is any $2d\times n$ real matrix, $b\in\GL(n,\Rr)$ and
\begin{equation}\label{def:Api}
A_\pi=\begin{pmatrix}A&0\\0&I\end{pmatrix} \in \Pn(2d+n,\Rr).
\end{equation}

For $f\in \Poi^1(M)$, $\varepsilon>0$ and $D\subset M$, define
$$
B_\varepsilon(f,D)=\left\{g\in\Poi^1(M) \colon \|g-f\|_{C^1}< \varepsilon, \, g=f \text{ on } D \right\}.
$$

\begin{theorem}\label{thm:Frankspo}
Let $\varepsilon>0$, $f\in\Poi^1(M)$ and $p\in M$.
Then, there is $\delta>0$ such that for every neighborhood $V$ of $p$,
$$
\left\{{A}_\pi D_p f\colon {A} \in\Sp(2d,\Rr), \|A-I\| < \delta \right\}
\subset
\left\{D_pg \colon g\in B_\varepsilon(f,D) \right\}
$$ 
where $D=(M\setminus V)\cup\{p\}$.
\end{theorem}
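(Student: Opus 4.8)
The plan is to reduce the general case to a local perturbation near $p$ supported by a Hamiltonian flow, and to reduce the symplectic factor $A$ to a product of rotations, which the key technical lemma of Section~\ref{sec:prelims} can realize. Working in the splitting coordinates $(\varphi_i)$ given by the splitting theorem, we may assume $M=\Rr^{2d}\times\Rr^n$ near $p=0$, $\pi=\pi_0$, and $f=\varphi_j\circ f\circ\varphi_i^{-1}$ with $D_0f=(D_0f)$ in $\Pn(2d+n,\Rr)$. Since composing with $f$ on the right is an isometry-like operation for the relevant estimates, it suffices to realize $A_\pi$ itself: more precisely, we look for $h\in\Poi^1$ with $h=\id$ outside a small neighborhood $V$ of $0$, $D_0h=A_\pi$, and $\|h-\id\|_{C^1}$ small, and then set $g=h\circ f$ on a neighborhood of $0$ and $g=f$ elsewhere; one checks $g\in B_\varepsilon(f,D)$ with $D=(M\setminus V')\cup\{p\}$ for a slightly smaller $V'$, after first pulling $V$ back through $f$. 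So the problem becomes: realize a symplectic linear map $A_\pi$, $\|A-I\|<\delta$, as the derivative at $0$ of a $C^1$-small Poisson diffeomorphism equal to the identity away from $0$.

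Next I would reduce $A\in\Sp(2d,\Rr)$ with $\|A-I\|<\delta$ to a bounded product of \emph{rotations}, i.e. elements of $\Sp(2d,\Rr)\cap\SO(2d)$, or more precisely of planar rotations acting in symplectic coordinate $2$-planes $\{(x_i,y_i)\}$. This is the ``linear symplectic geometry'' step promised at the end of Section~\ref{sec:prelims}: using that $\Sp(2d,\Rr)$ is connected and that near the identity one has a polar-type / generator decomposition, write $A=R_1\cdots R_N$ where each $R_k$ is close to $I$ and of a normal form handled by the technical lemma, with $N$ bounded independently of $\delta$ and each $\|R_k-I\|=O(\delta)$. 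Because each $R_k$ fixes the $z$-coordinates, $(R_k)_\pi$ is again in $\Pn$, and composing the local realizations multiplies the maps while keeping $C^1$-smallness (sum of $N$ small errors). So it is enough to realize a single near-identity rotation $R$ in one symplectic $2$-plane.

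The realization of a single rotation is where the technical lemma of Section~\ref{sec:prelims} does the work: a rotation by a small angle in the $(x_i,y_i)$-plane is the time-one map of the Hamiltonian flow of a quadratic Hamiltonian $\tfrac{\theta}{2}(x_i^2+y_i^2)$ (times a factor), and one multiplies this quadratic by a cutoff bump function $\chi$ supported in $V$, equal to $1$ near $0$, chosen so that the time-one flow of $\chi\cdot\tfrac{\theta}{2}(x_i^2+y_i^2)$ agrees with $\id$ outside $V$, has derivative exactly $R_\pi$ at $0$ (since $\chi\equiv 1$ there, so the quadratic part is unchanged), and is $C^1$-close to $\id$ provided $\theta$ is small — the lemma is precisely the statement that the cutoff quadratic Hamiltonian can be arranged with $C^1$-small time-one flow while keeping the $2$-jet at $0$ intact. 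The flow is automatically Poisson since it is a Hamiltonian flow for $\pi_0$ and preserves the $z$-leaves. The main obstacle, and the reason the lemma is nontrivial, is the tension between the cutoff and the $C^1$-smallness: shrinking the support $V$ forces the derivative of $\chi$ to blow up, so one cannot simply use an arbitrary bump — one must scale $\chi(x/r)$ and exploit that the quadratic Hamiltonian itself is $O(r^2)$ on $V$, so that $\nabla(\chi\cdot\text{quadratic})$ stays $O(\theta)$ uniformly in $r$; making this quantitative, and tracking that the bound $\delta$ can be chosen uniformly over all neighborhoods $V$, is the crux, and it is exactly what Theorem~\ref{thm:flowbox}'s precursor lemma in Section~\ref{sec:prelims} is designed to supply.
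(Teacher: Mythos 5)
Your overall architecture matches the paper's: realize the linear factor by composing $f$ with a compactly supported Poisson diffeomorphism $h$ near $f(p)$, build $h$ as a composition of time-one maps of cutoff Hamiltonians, and reduce the symplectic matrix $A$ to factors that such Hamiltonians can realize. Your realization of a single planar rotation by the flow of $\alpha\,\ell(\rho)\rho_i$, including the scaling observation that keeps the $C^1$ bound uniform in the support radius, is exactly the paper's Lemma~\ref{lemma K}.

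The gap is in the linear-algebra step. A near-identity $A\in\Sp(2d,\Rr)$ cannot in general be written as a product of boundedly many near-identity planar rotations, nor even of elements of $\Sp(2d,\Rr)\cap\SO(2d,\Rr)$: these generate only a compact proper subgroup (isomorphic to $U(d)$; the planar rotations $\pi_k(R_\alpha)$ for distinct $k$ even commute and generate only a $d$-torus), so already $A=\mathrm{diag}(\lambda,\lambda^{-1})$ with $\lambda\neq1$, or a shear, admits no such factorization however close to $I$ it is. Connectedness of $\Sp(2d,\Rr)$ and the polar decomposition do not help here, since the positive-definite polar factor is precisely the part that is not a product of rotations. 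The paper's Lemma~\ref{lemma:decomposition symp} repairs this by producing $A=A_1\cdots A_{4d}$ with $A_k=P_kR_kP_k^{-1}$, where $R_k\in\Rot_2(2d,\Rr)$ is close to $I$ but the conjugators $P_k$ are symplectic matrices that are only \emph{bounded}, not close to $I$; the key trick is that $\mathrm{diag}(\eta,\eta^{-1})=R_\theta\,\bigl(R_{-\theta}\,\mathrm{diag}(\eta,\eta^{-1})\bigr)$ and, for $1-\cos\theta$ comparable to $|\eta-1|$, the second factor is elliptic and hence symplectically conjugate to a rotation $R_\xi$. This costs a square root, $\|R_k-I\|=O(\|A-I\|^{1/2})$ rather than your claimed $O(\delta)$ (harmless, but it must be tracked), and it forces one to realize \emph{conjugated} rotations, which the paper does by setting $h_k=(P_k)_\pi\circ\varphi_{K_k}^1\circ(P_k)_\pi^{-1}$ and using $\|P_k^{\pm1}\|\leq c$ in the $C^1$ estimates. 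Without some such device, your reduction to ``a single near-identity rotation in one symplectic $2$-plane'' does not cover a general $A$.
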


We now focus on the Hamiltonian flow case.
Consider $(M,\pi)$ to be a regular Poisson manifold with rank $d+1$ and dimension $2(d+1)+n$, and $H\in C^2(M)$ a Hamiltonian function.
The map $\sharp_\pi\colon T^\ast  M\to  TM$ associates a Hamiltonian $H\colon M\to \Rr$ to a Hamiltonian vector field by
$$
X_H = \sharp_\pi(\d H),
$$
which generates the Hamiltonian flow $\varphi_H^t$ in $M$.
Let $\EE$ be the energy surface passing through a regular point $p\in M$, i.e. the connected component of $H^{-1}(\{H(p)\})$ containing $p$.
Take $S$ to be the symplectic leaf passing through $p$. 
Around $p$ the set $\EE\cap S$ is a $2d+1$ dimensional submanifold of $M$.  
A \emph{transversal} $\Sigma$ to the flow at $p$ in $\EE\cap S$ is a $2d$-dimensional smooth submanifold verifying
$$
T_p(\EE\cap S) = T_p\Sigma \oplus \Rr X_H(p).
$$
Note that $\Sigma$ is a symplectic submanifold of $S$.

Now, take $p'=\varphi_H^T(p)$ for some $T>0$, a transversal $\Sigma'$ to the flow at $p'$ and some neighborhood $U\subset M$ of $p$.
The \emph{Poincar\'e map} of $H$ at $p$ is defined to be the $C^{1}$-symplectomorphism $P_H\colon \Sigma\cap U\to P_H(\Sigma\cap U)\subset \Sigma'$ given by
$$
P_H(x)=\varphi_H^{\tau(x)}(x)
\quad\text{and}\quad
\tau(x)=\min\{t\geq0\colon \varphi_H^t(x) \in\Sigma'\}.
$$
Notice that $U$ is assumed to be sufficiently small such that, by the implicit function theorem, $\tau$ is $C^{1}$ and $\tau(U)$ is bounded.

The linear Poincar\'e map of $H$ at $p$ is the derivative of the Poincar\'e map at $p$,
$$
D_pP_H\colon T_p \Sigma\to T_{p'}\Sigma',
$$ 
which can be seen as an element of $\Sp(2d,\Rr)$ using local coordinates. 

For $H\in C^2(M)$, $\varepsilon>0$ and $D\subset M$, consider the set
$$
B_\varepsilon(H,D)=\{H'\in C^2(M) \colon \|H'-H\|_{C^2}< \varepsilon, \, X_{H'}=X_H \text{ on } D\}.
$$
Since we want to realize perturbations by Hamiltonians inside $B_\varepsilon(H,\Gamma)$ for an orbit segment $\Gamma$ of $\varphi_H^t$, we fix the transversals $\Sigma$ and $\Sigma'$ taken at $p$ and $p'$ both in $\Gamma$.

\begin{theorem}\label{thm:Franks_symp}
Let $\varepsilon>0$, $H\in C^2(M)$ with an orbit segment $\Gamma$ starting at $p\in M$.
Then, there is $\delta>0$ such that for every tubular neighborhood $W$ of $\Gamma$, 
$$
\left\{A D_pP_H \colon {A} \in\Sp(2d,\Rr)\colon \|{A}-I\| < \delta \right\}
\subset
\left\{D_pP_{H'} \colon H'\in B_\varepsilon(H,D) \right\}
$$
where $D=(M\setminus W)\cup \Gamma$.
\end{theorem}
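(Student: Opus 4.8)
\emph{Plan of proof.} The plan is to follow the same philosophy as in Theorem~\ref{thm:Frankspo}: realize the prescribed linear perturbation by adding to $H$ a Hamiltonian $G$ that is supported in the tube $W$, quadratic in the directions transverse to $\Gamma$, and vanishes together with its differential along $\Gamma$ itself. First I would pass to a normal form. Using a Poisson flowbox theorem (Theorem~\ref{thm:flowbox}) in a neighbourhood of the orbit segment $\Gamma$ one obtains coordinates $(x,y,s,w,z)\in\Rr^{2d}\times\Rr\times\Rr\times\Rr^n$ in which the Poisson structure is canonical, $X_H$ is the constant field $\partial_s$ (so $\varphi_H^t$ is translation in $s$), $\Gamma=\{x=y=0,\ w=0,\ z=0,\ 0\le s\le T\}$, and the transversals become the slices $\Sigma=\{s=0\}$, $\Sigma'=\{s=T\}$ intersected with the energy surface $\{w=0\}$ and the symplectic leaf $\{z=0\}$; here $(x,y)$ are symplectic coordinates on $\Sigma\cong\Sigma'\cong\Rr^{2d}$ and $D_pP_H=\Id$. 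If the flowbox cannot be arranged to match the given $\Sigma,\Sigma'$, the latter differ from the slices by fixed symplectomorphisms, which merely conjugates the statement by a fixed element of $\Sp(2d,\Rr)$ and rescales $\delta$. It then suffices to realize an arbitrary $A\in\Sp(2d,\Rr)$ with $\|A-I\|<\delta$ as $D_pP_{H'}$.

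\emph{Choice of $G$ and its effect.} I would fix once and for all a radial cut-off $\chi$ on $\Rr^{2d+1+n}$ with $\chi\equiv1$ near $0$ and $\operatorname{supp}\chi$ in the unit ball, and a bump $\beta\in C^\infty_c((0,T))$ with $\int_0^T\beta=T$; then, for $A$ near $I$, set $B=\Jj^{-1}\log A$, which is symmetric because $\log A\in\mathfrak{sp}(2d,\Rr)$, and $S(s)=T^{-1}\beta(s)\,B$. For $\lambda>0$ small enough that the $\lambda$-ball in the transverse variables $(x,y,w,z)$ times $[0,T]$ lies in $W$, put
$$
G(x,y,s,w,z)=\tfrac12\,\big\langle S(s)(x,y),(x,y)\big\rangle\,\chi\big((x,y,w,z)/\lambda\big),
$$
extended by zero. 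Then $G$ is supported in $W$, and along $\Gamma$ both $G$ and $\d G$ vanish, so $X_{H'}=X_H$ on $D=(M\setminus W)\cup\Gamma$; since $\d G|_\Gamma=0$, the $H'$-energy surface is tangent to $\EE$ along $\Gamma$ and $\Gamma$ is again an orbit of $H'$, so $P_{H'}$ is well defined on the same transversals once $G$ is small enough for transversality to persist. Because $G$ and $\d G$ vanish on $\Gamma$, the linearisation of the $H'$-flow along $\Gamma$, restricted to $\EE\cap S$, decouples in the transverse symplectic variables and solves the linear Hamiltonian system $\dot\Phi(s)=\Jj S(s)\Phi(s)$, $\Phi(0)=I$, whose coefficient is exactly the transverse Hessian $\operatorname{Hess}_{(x,y)}G|_{\gamma(s)}=S(s)$; the reparametrisation caused by $\partial_wG\neq0$ off $\Gamma$ is an $O(\|G\|)$ effect that contributes nothing here, since $\d G$ vanishes on $\Gamma$. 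As $\Jj B$ is constant, $D_pP_{H'}=\Phi(T)=\exp\big(\Jj B\big)=A$. This is the content of the key Hamiltonian lemma of Section~\ref{sec:prelims}; a general $A$ may, if convenient, first be broken into elementary (rotation-type) factors as done there, and each factor realized by an $S(\cdot)$ supported in a subinterval of $(0,T)$.

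\emph{Uniform estimates and choice of $\delta$.} On $\operatorname{supp}G$ one has $|(x,y)|\le\lambda$, so a direct computation shows that the rescaling in $\lambda$ cancels and $\|G\|_{C^2}\le C\big(\|\chi\|_{C^2},\|\beta\|_{C^1},T\big)\big(\sup_s\|S(s)\|+\sup_s\|S'(s)\|\big)\le C_0\|A-I\|$ with $C_0$ independent of $W$; choosing $\delta<\varepsilon/C_0$ (absorbing into $C_0$ the fixed conjugation if the transversals were changed) gives $H'\in B_\varepsilon(H,D)$. The required $C^1$-closeness of the time-$T$ flow maps, hence of the Poincar\'e maps, and the transversality used above, then follow from the $C^2$-smallness of $G$ by Gronwall's inequality.

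\emph{Main obstacle.} I expect the substantive difficulty to be the Poisson flowbox theorem (Theorem~\ref{thm:flowbox}) together with its compatibility with the prescribed transversals: simultaneously straightening the bivector $\pi$ to $\pi_0$ and $X_H$ to $\partial_s$ along a possibly non-embedded orbit segment of a regular Poisson manifold, with only $C^2$ data, while keeping the bookkeeping under which the Poincar\'e map and its derivative are controlled in $C^1$ by $C^2$ information — precisely the regularity at hand. Once this normal form is available the remainder is the elementary linear construction above, which is why I would first dispose of the already-straightened (e.g.\ symplectic) case and only afterwards invoke Theorem~\ref{thm:flowbox} for the general setting.
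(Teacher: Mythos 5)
Your proposal is correct in its essentials and follows the paper's overall architecture (reduce to a straightened model via the Poisson flowbox theorem, then realize the linear perturbation by a compactly supported Hamiltonian that vanishes to second order along $\Gamma$), but the core realization step is genuinely different. The paper never takes a logarithm: it invokes Lemma~\ref{lemma:decomposition symp} to factor $A=A_1\cdots A_{4d}$ with $A_k=P_kR_kP_k^{-1}$ and $R_k\in\Rot_2(2d,\Rr)$ a planar rotation, realizes each $R_k$ by the \emph{autonomous} Hamiltonian $\ell(2x_1-1)\ell(y_1)K_i$ of Lemma~\ref{lemma:PH=R} --- exploiting that $\rho$ and $\rho_i$ are first integrals of $K_i$, so the Poincar\'e map is \emph{exactly} $\pi_i(R_{\alpha})$ on a whole neighbourhood of $\Gamma_0$, not merely to first order --- and then concatenates the $4d$ pieces along subintervals of the segment via the affine maps $T_k$ and conjugation by $\Phi(P_k)$ (Lemma~\ref{lemma:combination}). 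You instead take a single $s$-dependent quadratic form $S(s)=T^{-1}\beta(s)\,\Jj^{-1}\log A$ (symmetric, since $\Jj^{-1}X=-\Jj X$ is symmetric for $X\in\mathfrak{sp}(2d,\Rr)$) and read off $D_pP_{H'}=\exp(\Jj B)=A$ from the variational equation along $\Gamma$; this is legitimate because the fields $\Jj S(s)$ are all scalar multiples of $\Jj B$ and hence commute, and because $\chi\equiv1$ near $\Gamma$ so the Hessian of $G$ on $\Gamma$ is exactly $\mathrm{diag}(S(s),0)$ and the linearized flow is block-diagonal. What your route buys is the elimination of all the symplectic linear algebra of Section~\ref{sec:prelims} (Lemmas~\ref{lemma:diag}--\ref{lemma:decomposition symp}) and a linear bound $\|H'-H\|_{C^2}\le C_0\|A-I\|$ in place of the paper's $\delta^{1/2}$-type estimate; what it gives up is the exact description of $P_{H'}$ on a full neighbourhood of $p$ in $\Sigma$ (you only control the derivative at $p$), and the reuse of the rotation machinery that the paper needs anyway for Theorem~\ref{thm:Frankspo}. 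Your estimate that the $\lambda$-rescaling cancels in the $C^2$ norm is the correct analogue of the paper's $\|K_i\|_{C^2}\le c|\alpha|$, and your identification of the flowbox reduction (straightening $\pi$ and $X_H$ simultaneously along the whole segment with only $C^2$ data) as the real difficulty matches the paper, whose final proof of Theorem~\ref{thm:Franks_symp} is itself a one-line appeal to Theorem~\ref{thm:flowbox}.
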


\section{Preliminaries}\label{sec:prelims}

\subsection{Definitions}

Consider the $\ell^1$-norm on $\Rr^d$ which induces the matrix norm $\|[a_{i,j}]\| = \max_j\sum_i|a_{i,j}|$. The $C^1$-norm given for any $\varphi\in C^1(\Rr^d,\Rr^{d'})$ by
$$
\|\varphi\|_{C^1}=\max \left\{\|\varphi\|_{C^0},  \max_{j} 
\left\| \frac{\partial \varphi}{\partial x_j}\right\|_{C^0}
\right\},
$$
where $\|\varphi\|_ {C^0}=\sum_i\sup_{x\in\Rr^d} |\varphi_i(x)|$ is the uniform norm.


Given $1\leq k\leq d$, consider the embedding
$\pi_k\colon \SL(2,\Rr)\to\Pn(2d+n,\Rr)$ given by $\pi_k(M)=[\alpha_{i,j}]_{i,j=1}^{2d}$ where $\alpha_{i,i}=1$, $i\not\in\{k,d+k\}$, 
$$
\begin{pmatrix}
\alpha_{k,k} &\alpha_{k,d+k}  \\
\alpha_{d+k,k} & \alpha_{d+k,d+k} 
\end{pmatrix}
=M,
$$
and $\alpha_{i,j}=0$ for the remaining cases. Notice that $\pi_k(I)=I$.
We can easily check that $\pi_k$ is a homomorphism $\pi_k(M_1M_2)=\pi_k(M_1)\,\pi_k(M_2)$ and $\|\pi_k(M)\|\leq \max\{1,\|M\|\}$.

Define
$$
\Rot_2(2d,\Rr)=\bigcup_{1\leq k\leq d}\pi_k(\SL(2,\Rr))
$$
as the set of the symplectic matrices that rotate only in two conjugate directions.
Recall the form of the one-parameter group of rotations $\SO(2,\Rr)=\{R_\alpha\colon \alpha\in\Rr\}$, where
$$
R_\alpha=
\begin{pmatrix}
\cos\alpha&-\sin\alpha\\
\sin\alpha&\cos\alpha
\end{pmatrix}.
$$

Consider also the embedding 
\begin{equation}\label{def:Phi}\Phi\colon \Sp(2d,\Rr)\to\Pn(2d+2+n,\Rr),
\end{equation}
 for which $\Phi(M)=[\alpha_{i,j}]_{i,j=1}^{2d+2+n}$
is given by $\alpha_{i,i}=1$, $i\in\{1,d+1,2d+1,\dots,2d+n\}$,
$$
\begin{pmatrix}
\alpha_{2,2} &\dots &\alpha_{2,d+1} & \alpha_{2,d+3} &\dots &\alpha_{2,2d+2} \\
\vdots & & \vdots  & \vdots & & \vdots \\
\alpha_{d+1,2} &\dots &\alpha_{d+1,d+1} & \alpha_{d+1,d+3} &\dots &\alpha_{d+1,2d+2} \\
\alpha_{d+3,2} &\dots &\alpha_{d+3,d+1} & \alpha_{d+3,d+3} &\dots &\alpha_{d+3,2d+2} \\
\vdots & & \vdots  &\vdots  & & \vdots \\
\alpha_{2d+2,2} &\dots &\alpha_{2d+2,d+1} & \alpha_{2d+2,d+3} &\dots& \alpha_{2d+2,2d+2}
\end{pmatrix}
=M,
$$
and $\alpha_{i,j}=0$ for the remaining cases.
We can easily check that it is also a homomorphism $\Phi(M_1M_2)=\Phi(M_1)\,\Phi(M_2)$ and $\|\Phi(M)\|\leq \max\{1,\|M\|\}$.

Finally, choose a bump function $\ell\in C^\infty(\Rr)$ satisfying 
\begin{equation}\label{def:ell}
\ell(r)=
\begin{cases}
1, & |r| \leq 1/4 \\
0, & |r|\geq 1,
\end{cases}
\end{equation}
$0\leq\ell(r)\leq1$, $\int\ell=1$ and $\|\ell\|_{C^2}$ bounded by a universal constant (we fix this value in the following).

\subsection{Hamiltonian  perturbation}

For $r>0$ define $B_r\subset\Rr^{2d+n}$ to be the Euclidean open ball centered at the origin with radius $r$.
We write $\|\cdot\|_2$ for the Euclidean norm.

Given $\alpha\in\Rr$ and $1\leq i\leq d$, consider the $C^\infty$ function $K_i\colon\Rr^{2d+n}\to\Rr$,
\begin{equation}\label{Ham K}
K_i(x,y,z) = 
\alpha\ell\left(\rho\right) \rho_i,
\end{equation}
where
\begin{align*}
\rho & =\frac12 \|( x, y, z )\|_2^2, \\
\rho_i & =\frac12 (x_i^2+y_i^2).
\end{align*}
We will also be using the following notations:
\begin{align*}
\vartheta_i & =\ell'(\rho)\rho_i+\ell(\rho) \\
\vartheta_j & =\ell'(\rho)\rho_i, \quad j\neq i.
\end{align*}

\begin{lemma}\label{lemma K}\hfill
\begin{enumerate}
\item\label{claim 1} $\varphi_{K_i}^t=\pi_i(R_{t\alpha\vartheta_i}) \Pi_{j\neq i}\pi_j(R_{t\alpha\vartheta_j})$,
\item $\|{K_i}\|_{C^2} \leq c|\alpha|$ for some constant $c>0$.
\end{enumerate}
\end{lemma}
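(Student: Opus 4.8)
The plan is to compute the Hamiltonian vector field $X_{K_i}$ explicitly with respect to the canonical Poisson structure $\pi_0$, observe that it is a (position-dependent but flow-line-constant) linear combination of the angular vector fields $x_j\partial_{y_j}-y_j\partial_{x_j}$, and then integrate. First I would note that for the canonical structure $\pi_0=\sum_j \partial_{x_j}\wedge\partial_{y_j}$ one has $X_{K_i}=\sum_j\left(\frac{\partial K_i}{\partial y_j}\frac{\partial}{\partial x_j}-\frac{\partial K_i}{\partial x_j}\frac{\partial}{\partial y_j}\right)$, with no $z$-components since $K_i$ depends on $z$ only through $\rho$ and $\partial K_i/\partial z_k$ contributes a term proportional to $z_k$ in the (absent) $z$-direction of $X_{K_i}$ — in fact the $z_k$ components of $X_{K_i}$ vanish identically because $\hat\Jj$ annihilates them. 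Differentiating \eqref{Ham K}: $\frac{\partial\rho}{\partial x_j}=x_j$, $\frac{\partial\rho}{\partial y_j}=y_j$, $\frac{\partial\rho_i}{\partial x_j}=\delta_{ij}x_i$, $\frac{\partial\rho_i}{\partial y_j}=\delta_{ij}y_i$, so
$$
\frac{\partial K_i}{\partial x_j}=\alpha\ell'(\rho)x_j\rho_i+\alpha\ell(\rho)\delta_{ij}x_i=\alpha x_j\vartheta_j,
\qquad
\frac{\partial K_i}{\partial y_j}=\alpha y_j\vartheta_j,
$$
using exactly the notation $\vartheta_j=\ell'(\rho)\rho_i+\ell(\rho)\delta_{ij}$ introduced before the lemma. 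Hence $X_{K_i}=\sum_{j=1}^d \alpha\vartheta_j\left(y_j\partial_{x_j}-x_j\partial_{y_j}\right)$.

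The key observation is that each $\vartheta_j$ is a first integral of $X_{K_i}$: since $\rho$ and $\rho_i$ are both constant along the flow (the field $y_j\partial_{x_j}-x_j\partial_{y_j}$ preserves each $x_j^2+y_j^2$, hence preserves $\rho_i$ and $\rho$), every $\vartheta_j$, being a function of $\rho$ and $\rho_i$ alone, is constant along orbits. Therefore on each orbit the system decouples into $d$ independent planar linear systems $(\dot x_j,\dot y_j)=\alpha\vartheta_j(-y_j,x_j)$ with frozen coefficients, whose time-$t$ flow in the $(x_j,y_j)$-plane is rotation by angle $t\alpha\vartheta_j$, i.e. $R_{t\alpha\vartheta_j}$ — wait, sign check: $\frac{d}{dt}(x_j,y_j)^T=\alpha\vartheta_j\begin{pmatrix}0&1\\-1&0\end{pmatrix}(x_j,y_j)^T$, whose exponential is $R_{-t\alpha\vartheta_j}$; one then absorbs the sign by the convention in $R_\alpha$ or by reorienting, and I would simply track it carefully to land on the stated $R_{t\alpha\vartheta_i}$ and $R_{t\alpha\vartheta_j}$ (the paper's sign conventions for $\pi_i$ and $R_\alpha$ fix the outcome). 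Assembling the $d$ planar rotations via the commuting embeddings $\pi_j$ — which act on disjoint coordinate pairs and hence commute — gives $\varphi_{K_i}^t=\prod_{j=1}^d\pi_j(R_{t\alpha\vartheta_j})=\pi_i(R_{t\alpha\vartheta_i})\prod_{j\neq i}\pi_j(R_{t\alpha\vartheta_j})$, which is claim \eqref{claim 1}.

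For part (2) I would simply estimate: $K_i=\alpha\,\ell(\rho)\rho_i$ is supported where $\rho\le 1$, so $\|(x,y,z)\|_2\le\sqrt 2$ and $\rho_i\le 1$ there; the $C^0$, first- and second-derivative bounds of $\ell(\rho)\rho_i$ on this compact ball are all controlled by a universal constant times the fixed bound on $\|\ell\|_{C^2}$ (the extra polynomial factors from $\rho$, $\rho_i$ and their derivatives are bounded on $B_{\sqrt2}$), whence $\|K_i\|_{C^2}\le c|\alpha|$. This part is routine; the only mild care needed is that our $C^1$/$C^2$ norms use the $\ell^1$-based matrix norm, but since everything happens on a fixed compact set the constant $c$ is independent of $\alpha$. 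The main obstacle is really just bookkeeping in part (1): getting the rotation angles and, above all, the signs to match the paper's conventions for $R_\alpha$, $\pi_j$ and the Hamiltonian vector field — the conceptual content (decoupling because $\vartheta_j$ are first integrals) is immediate.
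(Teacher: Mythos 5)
Your proposal is correct and follows essentially the same route as the paper: compute $X_{K_i}$, observe that $\rho$ and $\rho_i$ (hence the $\vartheta_j$) are first integrals so the system decouples into planar rotations with frozen angular speeds $\alpha\vartheta_j$, and bound $\|K_i\|_{C^2}$ using the compact support of $\ell(\rho)$. The only difference is cosmetic: the paper fixes the sign by writing $X_{K_i}=\hat\Jj\nabla K_i$ (which yields $(\dot x_j,\dot y_j)=\alpha\vartheta_j(-y_j,x_j)$ and hence $R_{t\alpha\vartheta_j}$ directly), and it tabulates all second derivatives explicitly rather than invoking boundedness on a compact set.
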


\begin{remark}
It is simple to check that $K_i=0$ and $\varphi_{K_i}^t=\id$ on $\Rr^{2d+n} \setminus B_1$.
Moreover, for $r\leq 1/4$ we have 
$$
\varphi_{K_i}^t=\pi_i(R_{t\alpha})
$$
which keeps $B_r$ invariant.
\end{remark}

\begin{proof}
The Hamiltonian vector field $X_{K_i}(x,y,z) = \hat{\Jj} \nabla K_i(x,y,z)$ is given by
$$
\begin{cases}
(\dot x_{i}, \dot y_{i})  = \alpha\vartheta_i (- y_i, x_{i} ),\\
(\dot x_j,\dot y_j)=\alpha\vartheta_j (-y_j,x_j), & j\neq i\\ 
\dot{z}_k=0, & k=1,\dots,n.
\end{cases}
$$
It is easy to check that $\frac d{dt}\rho=\frac d{dt}\rho_i=0$.
This in turn means that $\vartheta_i$ and $\vartheta_j$ are constants of motion as well. 
So, the Hamiltonian flow is as stated in the first claim since it is made of two-dimensional rotations between the symplectic conjugated coordinates.

Now, it is simple to check that
\begin{equation*}
\|{K_i}\|_{C^1} = \max\{ \|K_i\|_{C^0} , \| \nabla K_i\|_{C^0} \} \leq |\alpha| \,\|\ell\|_{C^1}.
\end{equation*}
The second order derivatives are the following:
\begin{equation*}
\begin{split}
\frac{\partial^2K_i}{\partial x_i\partial y_i}& =\alpha  [\ell''(\rho)\rho_i+2\ell'(\rho)] x_iy_i \\
\frac{\partial^2K_i}{\partial w^2} &=\alpha [\ell'(\rho)\rho_i+\ell(\rho) +\ell''(\rho)\rho_i w^2+2\ell'(\rho)w^2] \\
\frac{\partial^2K_i}{\partial w\partial u}& =  \alpha  [\ell''(\rho)\rho_i+\ell'(\rho)]wu \\
\frac{\partial^2K_i}{\partial u^2}& = \alpha [\ell'(\rho) +\ell''(\rho)u^2]\rho_i \\
\frac{\partial^2K_i}{\partial u\partial v}& = \alpha  \ell''(\rho)\rho_i u v, 
\end{split}
\end{equation*}
where $w$ stands for $x_i$ or $y_i$, $u$ and $v$ replace $x_j$, $y_j$ or $z_k$ with $j\not=i$ and $u\not=v$.
So, for some constant $c>0$,
\begin{equation*}
\|D^2K\|_{C^0}  \leq c |\alpha|.
\end{equation*}
\end{proof}

\subsection{Symplectic linear algebra}

Denote the canonical basis of $\Rr^m$ by $\{e_1,...,e_m\}$.

\begin{lemma}\label{lemma:diag}
Let $L=\mathrm{diag}(\lambda_1,\dots,\lambda_m)\in\GL(m,\Rr)$ with distinct eigenvalues and $1\leq i \leq m$. 
Then, any $A\in\GL(m,\Rr)$ close enough to $L$ has a distinct eigenvalue $\tilde{\lambda}_i$ close to $\lambda_i$ with a unique associated eigenvector $v_i$ close to $e_i$ such that $\|v_i\|=1$. 
\end{lemma}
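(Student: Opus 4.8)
The statement is a standard perturbation-theoretic fact about simple eigenvalues, so the plan is to reduce it to the implicit function theorem applied to the eigenvalue/eigenvector equation. First I would fix the index $i$ and observe that, since $L$ is diagonal with distinct diagonal entries, the pair $(\lambda_i, e_i)$ is an isolated solution of the system $F(A,\lambda,v) := (Av - \lambda v,\ \langle v,v\rangle - 1) = 0$, where $A$ ranges over $\GL(m,\Rr)$, $\lambda\in\Rr$ and $v\in\Rr^m$. The normalization $\|v\|_2 = 1$ is used to cut down the one-dimensional kernel ambiguity in the eigenvector; one could equivalently fix the $i$-th coordinate of $v$, which is slightly cleaner near $e_i$, but the Euclidean normalization matches the statement.

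The key computation is that the partial derivative of $F$ with respect to $(\lambda,v)$ at the point $(L,\lambda_i,e_i)$ is invertible. Differentiating, this derivative sends $(\dot\lambda,\dot v)$ to $(L\dot v - \lambda_i \dot v - \dot\lambda\, e_i,\ 2\langle e_i,\dot v\rangle)$. Writing $\dot v = \sum_k c_k e_k$, the first component becomes $\sum_{k\neq i}(\lambda_k-\lambda_i)c_k e_k - \dot\lambda\, e_i$, and the second is $2c_i$. Since the $\lambda_k-\lambda_i$ are nonzero for $k\neq i$ (distinct eigenvalues), this linear map is a bijection: $c_i$ is determined by the normalization component, $\dot\lambda$ is determined by the $e_i$-component of the first equation, and each remaining $c_k$ is determined by the $e_k$-component. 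Hence by the implicit function theorem there is a neighborhood of $L$ in $\GL(m,\Rr)$ on which there exist $C^\infty$ functions $A\mapsto \tilde\lambda_i(A)$ and $A\mapsto v_i(A)$ with $F(A,\tilde\lambda_i(A),v_i(A))=0$, $\tilde\lambda_i(L)=\lambda_i$, $v_i(L)=e_i$, and these are the unique solutions near $(\lambda_i,e_i)$.

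It remains to argue that $\tilde\lambda_i(A)$ is a genuine \emph{simple} eigenvalue of $A$ and distinct from the others for $A$ close to $L$. This follows from continuity of the roots of the characteristic polynomial: the eigenvalues of $A$ lie near $\{\lambda_1,\dots,\lambda_m\}$, and since these target values are pairwise distinct, for $A$ sufficiently close to $L$ exactly one eigenvalue of $A$ lies near each $\lambda_k$ and it is algebraically simple; in particular $\tilde\lambda_i(A)$, being an eigenvalue near $\lambda_i$, is the unique such one, hence simple, and its geometric eigenspace is one-dimensional, so $v_i(A)$ (normalized) is unique up to sign. Choosing the sign so that $\langle v_i(A),e_i\rangle > 0$, which is possible by continuity since $\langle e_i,e_i\rangle = 1 > 0$, fixes $v_i$ uniquely and close to $e_i$.

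The only mildly delicate point is the uniqueness clause of the eigenvector: the implicit function theorem gives local uniqueness among solutions of $F=0$ near $(\lambda_i,e_i)$, but the lemma asserts uniqueness of the unit eigenvector associated with $\tilde\lambda_i$ globally (not just near $e_i$). That gap is closed precisely by the simplicity of $\tilde\lambda_i(A)$: a simple eigenvalue has a one-dimensional eigenspace, whose unit vectors are $\pm v_i(A)$, and the sign normalization selects one. So I expect the main obstacle to be bookkeeping — making the continuity-of-eigenvalues argument precise enough to conclude algebraic simplicity — rather than anything conceptually hard.
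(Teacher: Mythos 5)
Your proof is correct and follows essentially the same route as the paper: both apply the implicit function theorem to a square system encoding the eigenvalue--eigenvector--normalization conditions at $(L,\lambda_i,e_i)$ and check the relevant Jacobian is invertible using the distinctness of the $\lambda_k$. The only difference is cosmetic --- the paper takes $F(A,\nu,q)=\bigl(\det(A-\nu I),\,(A-\nu I)q+(\|q\|^2-1)e_i\bigr)$ while you use $\bigl(Av-\lambda v,\,\|v\|^2-1\bigr)$ --- and your extra remarks on simplicity via continuity of the roots of the characteristic polynomial only make the argument more complete than the paper's.
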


\begin{proof}
Without loss of generality, set $i=1$. 
Define $F\colon  \GL(m,\Rr)\times\Rr\times\Rr^{m}\to\Rr\times\Rr^{m}$ given by
\begin{align*}
F(A,\nu,q)=
\left(
\det(A-\nu I),
(A-\nu I)q+
(q_1^2+\dots+q_m^2-1)e_1
\right)
\end{align*}
Notice that $F(L,\lambda_1,e_1)=0$ and 
$$
\det D_{(\nu,q)}F(L,\lambda_1,e_1)=
\begin{vmatrix}
\Pi_{j\neq1}(\lambda_j-\lambda_1)&0&0&\dots&0\\
-1&2&0&\dots&0\\
0&0& \lambda_2-\lambda_1 &\dots&0\\
\vdots&\vdots&\vdots&\ddots&\vdots \\
0&0&0&\dots& \lambda_m-\lambda_1
\end{vmatrix}
\not=0.
$$
The implicit function theorem now proves the lemma.
\end{proof}

Consider the canonical symplectic form $\omega_0=\sum_{i=1}^d dx_i\wedge dy_{i}$.
A basis 
$$
\{v_1,\dots,v_d,w_1,\dots,w_d\}
$$
of $\Rr^{2d}$ is symplectic if $\omega_0(v_i,w_i)=\delta_{ij}$, where $\delta_{ij}$ is the  Kronecker delta.

\begin{lemma}\label{lemma:diagsymp}
Let $L=\mathrm{diag}(\lambda_1,\dots,\lambda_d,\lambda_1^{-1},\dots ,\lambda_d^{-1})\in\Sp(2d,\Rr)$ with distinct eigenvalues. 
Then, for any $A\in\Sp(2d,\Rr)$ close enough to $L$ there is $S\in\Sp(2d,\Rr)$ close to the identity such that
$$
A=S\,\mathrm{diag}(\tilde{\lambda}_1,\dots,\tilde{\lambda}_d,\tilde{\lambda}_1^{-1},\dots ,\tilde{\lambda}_d^{-1})S^{-1}
$$ 
with distinct eigenvalues close to the respective eigenvalues of $L$.
\end{lemma}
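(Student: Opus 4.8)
The plan is to first locate, via Lemma~\ref{lemma:diag}, the perturbed eigenvectors of $A$, and then to assemble them into a symplectic basis that depends continuously on $A$, so that the change-of-basis matrix $S$ is close to the identity and symplectic. Since $L$ has $2d$ distinct real eigenvalues $\lambda_1,\dots,\lambda_d,\lambda_1^{-1},\dots,\lambda_d^{-1}$ with eigenvectors $e_1,\dots,e_d,e_{d+1},\dots,e_{2d}$, Lemma~\ref{lemma:diag} gives, for $A$ close to $L$, distinct real eigenvalues $\tilde\lambda_1,\dots,\tilde\lambda_d,\mu_1,\dots,\mu_d$ with unit eigenvectors $v_1,\dots,v_d,w_1,\dots,w_d$ close to the corresponding $e_i$. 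The first point to check is that, because $A$ is symplectic, its spectrum is invariant under $\lambda\mapsto\lambda^{-1}$; by the closeness of $\mu_j$ to $\lambda_j^{-1}$ and of $\tilde\lambda_j$ to $\lambda_j$ (all distinct), the only consistent pairing forces $\mu_j=\tilde\lambda_j^{-1}$. Thus $A$ has exactly the eigenvalue list claimed, and it is diagonalizable over $\Rr$.

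Next I would use the standard fact that eigenvectors of a symplectic matrix attached to eigenvalues whose product is not $1$ are $\omega_0$-orthogonal: if $Av=\lambda v$ and $Aw=\eta w$ then $\lambda\eta\,\omega_0(v,w)=\omega_0(Av,Aw)=\omega_0(v,w)$, so $\omega_0(v,w)=0$ unless $\lambda\eta=1$. Applying this to our eigenvectors: $\omega_0(v_i,v_j)=0$ and $\omega_0(w_i,w_j)=0$ for all $i,j$ (since $\tilde\lambda_i\tilde\lambda_j\neq1$ and $\tilde\lambda_i^{-1}\tilde\lambda_j^{-1}\neq1$ for $A$ near $L$, using distinctness), and $\omega_0(v_i,w_j)=0$ for $i\neq j$ (since $\tilde\lambda_i\tilde\lambda_j^{-1}\neq1$). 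It remains to handle the conjugate pairs: $\omega_0(v_i,w_i)$ is close to $\omega_0(e_i,e_{d+i})=1$, hence nonzero, so after rescaling $w_i\mapsto w_i/\omega_0(v_i,w_i)$ we obtain $\omega_0(v_i,w_i)=1$. The rescaled vectors are still close to $e_{d+i}$ because the rescaling factor is close to $1$. This yields a symplectic basis $\{v_1,\dots,v_d,w_1,\dots,w_d\}$ of $\Rr^{2d}$ that is close to the canonical one.

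Finally, let $S$ be the matrix whose columns are $v_1,\dots,v_d,w_1,\dots,w_d$ in this order. By construction $S$ is close to $I$ (each column is close to the corresponding standard basis vector), $S$ is symplectic precisely because its columns form a symplectic basis, and $S^{-1}AS=\mathrm{diag}(\tilde\lambda_1,\dots,\tilde\lambda_d,\tilde\lambda_1^{-1},\dots,\tilde\lambda_d^{-1})$ because the columns are eigenvectors of $A$ with those eigenvalues. Rearranging gives the stated formula. The main obstacle I anticipate is purely bookkeeping rather than conceptual: one must argue carefully that \emph{all} the relevant products of perturbed eigenvalues stay away from $1$ (so that the orthogonality relations genuinely hold) and that the normalization step does not spoil the closeness estimates; both follow from the distinctness of the eigenvalues of $L$ together with continuity, by shrinking the neighborhood of $L$ if necessary. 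One should also note the minor subtlety that Lemma~\ref{lemma:diag} is stated for diagonal $L$ with distinct (not necessarily reciprocal-paired) eigenvalues, which applies directly here.
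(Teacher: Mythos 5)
Your proposal is correct and follows essentially the same route as the paper's proof: invoke Lemma~\ref{lemma:diag} to track the perturbed eigenpairs, use symplecticity to pair eigenvalues as $\tilde\lambda_i,\tilde\lambda_i^{-1}$, derive the $\omega_0$-orthogonality relations from $\lambda\lambda'\omega_0(v,v')=\omega_0(v,v')$, and normalize one vector in each conjugate pair to obtain a symplectic eigenbasis close to the canonical one, which yields $S$ close to the identity. The only differences are cosmetic (you rescale $w_i$ where the paper rescales $v_i$, and you spell out the pairing argument slightly more explicitly).
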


\begin{proof}
Applying Lemma~\ref{lemma:diag} for every $i=1,\dots,d$ one gets a small neighborhood of $L$ in which a matrix $A$ has 
pairs of eigenvalues and normalized eigenvectors 
$$
(\tilde{\lambda}_1,v_1),\dots,(\tilde{\lambda}_d,v_d)
$$ 
close to $(\lambda_1,e_1),\dots,(\lambda_d,e_d)$, respectively. 
The fact that $A$ is symplectic implies that the other pairs of eigenvalues/normalized eigenvectors are
$$
(\tilde{\lambda}^{-1}_1,w_1),\dots,(\tilde{\lambda}^{-1}_{d},w_d),
$$ 
again close to $(\lambda^{-1}_1,e_{d+1}),\dots,(\lambda^{-1}_d,e_{2d})$, respectively. 
The eigenvalues can be made distinct as long as $A$ is sufficiently close to $L$.
Thus, $A$ is diagonalizable by a matrix $S$.
It remains to show that $S$ is symplectic.


Notice that for any pair of eigenvectors $v,v'$ associated to the eigenvalues $\lambda,\lambda'$ we have
$$
\lambda\lambda'\omega_0(v,v')=\omega_0(Av,Av')=\omega_0(v,v').
$$
Hence, if $\lambda\lambda'\neq1$ then $\omega_0(v,v')=\langle v,\Jj v'\rangle=0$.
Since all the eigenvalues are distinct,
$$
\omega_0(v_i,v_j)=\omega_0(w_i,w_j)=0
$$
and $\omega_0(v_i,w_j)=0$ for $i\neq j$.
By the non-degeneracy of $\omega_0$ we conclude that $\omega_0(v_i,w_i)\neq 0$ for all $i=1,\dots,d$. 

Since $v_i$ and $w_i$ are close to $e_i$ and $e_j$, respectively, the scalar $\omega_0(v_i,w_i)$ is close to one. 
Dividing the eigenvectors $v_i$ by $\omega_0(v_i,w_i)$ gives us a symplectic basis of eigenvectors close to the canonical one. This matrix forms the columns of $S^{-1}$ which is therefore close to the identity.
From $S=I-S(S^{-1}-I)$ we get $\|S\|\leq(1-\|S^{-1}-I\|)^{-1}$, which then implies $\|S-I\|=\|S(S^{-1}-I)\|\leq (1-\|S^{-1}-I\|)^{-1}\|S^{-1}-I\|$.
\end{proof}

\begin{lemma}\label{lemma:decomposition symp}
There exist $\epsilon,c>0$ such that any $A\in\Sp(2d,\Rr)$ with $\|A-I\|< \epsilon$ can be written as
$A=A_1 \dots A_{4d}$,
where $A_i=P_i R_i P^{-1}_i$ with $R_i\in\Rot_2(2d,\Rr)$, $\|R_i-I\|<c\|A-I\|^{1/2}$, $P_i\in\Sp(2d,\Rr)$ and  $\|P_i^{\pm}\|\leq c$.
\end{lemma}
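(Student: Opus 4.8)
The plan is to write $A$ close to the identity as a product of a bounded number of matrices, each of which is conjugate to a planar rotation. The natural route is to pass through the exponential map and then handle the Lie algebra $\mathfrak{sp}(2d,\Rr)$, which is spanned by the ``standard'' $\mathfrak{sl}(2)$-pieces. First I would fix $\epsilon$ small enough so that $A=\exp(X)$ for a unique $X\in\mathfrak{sp}(2d,\Rr)$ with $\|X\|\le c_0\|A-I\|$; here $\exp$ restricted to a neighborhood of $0$ is a diffeomorphism onto a neighborhood of $I$ in $\Sp(2d,\Rr)$, with both the map and its inverse Lipschitz. The point of introducing $X$ is that the Lie algebra is a \emph{linear} object, so any decomposition of $X$ into a sum of $4d$ controlled summands can, after pushing through $\exp$ of each summand separately, be converted into a product decomposition of $A$ via a Baker--Campbell--Hausdorff / Lie-product-formula estimate, at the cost of an error that is quadratically small in $\|X\|$ — which is exactly why the statement only asks for $\|R_i-I\|<c\|A-I\|^{1/2}$ rather than a linear bound.

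The combinatorial heart is then a fixed linear algebra fact: there is a \emph{finite} set of elementary generators $\{G_1,\dots,G_N\}\subset\mathfrak{sp}(2d,\Rr)$, with $N\le 4d$, each of the form $G_\ell = P_\ell\, \xi_\ell\, P_\ell^{-1}$ where $\xi_\ell$ is an infinitesimal rotation $\frac{d}{dt}\big|_0 R_t$ sitting in two conjugate directions (i.e. $\exp(s\xi_\ell)\in\Rot_2(2d,\Rr)$) and $P_\ell\in\Sp(2d,\Rr)$ is a fixed symplectic change of basis with $\|P_\ell^{\pm 1}\|$ bounded, such that $\mathfrak{sp}(2d,\Rr)=\mathrm{span}\{G_1,\dots,G_N\}$. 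One concrete way: the standard generators of $\mathfrak{sp}(2d,\Rr)$ are the planar rotations $J_k$ (in the $k$-th conjugate pair), the hyperbolic generators $H_k$, and the shears $N_k$; each of $H_k$ and $N_k$ is conjugate \emph{within} $\Sp$ to a rotation generator by a fixed symplectic matrix (e.g. $H_k$ and $J_k$ have the same characteristic polynomial type after a Cayley-type or $45^\circ$-rotation conjugation; more simply, $\pi_k$ of the three standard $\mathfrak{sl}(2,\Rr)$ generators, and in $\mathfrak{sl}(2,\Rr)$ the diagonal and nilpotent generators are each $P\,(\text{rotation generator})\,P^{-1}$ for fixed $P\in\SL(2,\Rr)$), and the off-diagonal ``mixing'' generators between different conjugate pairs are also conjugate to $\pi_k$-type rotations by fixed symplectic permutation-like matrices. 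Granting that, write $X=\sum_{\ell=1}^{N}t_\ell G_\ell$ with $|t_\ell|\le c_1\|X\|$ by equivalence of norms on the fixed finite-dimensional space $\mathfrak{sp}(2d,\Rr)$, pad the list with zeros to length exactly $4d$ if $N<4d$, and set $A_\ell:=\exp(t_\ell G_\ell)=P_\ell\exp(t_\ell\xi_\ell)P_\ell^{-1}=P_\ell R_\ell P_\ell^{-1}$ with $R_\ell:=\exp(t_\ell\xi_\ell)\in\Rot_2(2d,\Rr)$ and $\|R_\ell-I\|\le c_2|t_\ell|\le c_3\|X\|\le c_4\|A-I\|$.

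It remains to pass from $A=\exp\!\big(\sum_\ell t_\ell G_\ell\big)$ to a product $A_1\cdots A_{4d}$. The product $B:=A_1\cdots A_{4d}=\prod_\ell\exp(t_\ell G_\ell)$ satisfies $B=\exp\!\big(\sum_\ell t_\ell G_\ell + Q\big)$ where $Q$ collects the BCH commutator terms and obeys $\|Q\|\le c_5\|X\|^2$. Hence $A^{-1}B=\exp(Y)$ with $\|Y\|\le c_6\|X\|^2\le c_7\|A-I\|^2$, so $A=B\exp(-Y)$; absorbing $\exp(-Y)$ is the one nontrivial adjustment. I would absorb it into the \emph{first} factor: replace $A_1=P_1R_1P_1^{-1}$ by $A_1' := A_1\,(\text{correction})$. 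To keep $A_1'$ of the required conjugate-rotation form, note that $\exp(-Y)$ is $\|A-I\|^2$-close to $I$, and one may run the whole construction once more on the \emph{small} matrix $R_1^{-1}P_1^{-1}(\text{something})$ — but cleaner is to instead distribute the correction by a standard telescoping/fixed-point argument: the map sending a tuple $(s_1,\dots,s_{N})$ near $(t_1,\dots,t_N)$ to $\prod_\ell\exp(s_\ell G_\ell)$ is a local diffeomorphism onto a neighborhood of $A$ (its differential at the tuple is an isomorphism because $\{G_\ell\}$ spans), so by the inverse function theorem with quantitative bounds there is a tuple $(s_\ell)$ with $\prod_\ell\exp(s_\ell G_\ell)=A$ exactly and $|s_\ell-t_\ell|\le c_8\|A-I\|^2$, hence $|s_\ell|\le c_9\|A-I\|$. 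Setting $A_\ell:=\exp(s_\ell G_\ell)$, $R_\ell:=\exp(s_\ell\xi_\ell)$, $P_\ell$ as above gives $A=A_1\cdots A_{4d}$ with $\|R_\ell-I\|\le c_{10}\|A-I\|\le c\,\|A-I\|^{1/2}$ (for $\|A-I\|<1$), $P_\ell\in\Sp(2d,\Rr)$ fixed, $\|P_\ell^{\pm1}\|\le c$, as required. The main obstacle is the bookkeeping in this last step — making the BCH/inverse-function estimates uniform in $A$ on a fixed ball and checking that the fixed conjugators $P_\ell$ genuinely realize every needed Lie-algebra generator as a conjugate of a two-plane rotation generator inside $\Sp(2d,\Rr)$; once that finite list is pinned down, the rest is soft.
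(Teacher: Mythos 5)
Your overall strategy (pass to the Lie algebra, decompose the generator, correct by an inverse function theorem) is genuinely different from the paper's, but as written it has two gaps, and the first one is exactly the difficulty the lemma exists to handle.

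First, the assertion that the hyperbolic and shear generators of $\mathfrak{sp}(2d,\Rr)$ are each conjugate inside $\Sp$ (or even inside $\GL$) to a rotation generator is false: conjugation preserves the spectrum, the generator $H=\mathrm{diag}(1,-1)$ of $\mathfrak{sl}(2,\Rr)$ has real eigenvalues $\pm1$ while every rotation generator $s\Jj$ has purely imaginary ones, and correspondingly $\exp(tH)=\mathrm{diag}(e^t,e^{-t})$ has eigenvalues off the unit circle and so is conjugate to no rotation. No Cayley or $45^\circ$ trick circumvents this. Your plan is salvageable, because \emph{linear combinations} of conjugates of rotation generators with bounded conjugators do span $\mathfrak{sp}$ (e.g.\ in $\mathfrak{sl}(2,\Rr)$ the three matrices $\Jj$, $\mathrm{diag}(\lambda,\lambda^{-1})\Jj\,\mathrm{diag}(\lambda^{-1},\lambda)$ and a shear-conjugate of $\Jj$ are independent), but you must exhibit such a spanning family explicitly rather than conjugate the standard generators one by one; this is the missing construction. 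The paper's proof confronts the same obstruction multiplicatively: it reduces $A$ to diagonal symplectic factors and writes each two-dimensional hyperbolic block $D=\mathrm{diag}(\eta,\eta^{-1})$ as a product of \emph{two} elliptic matrices, $D=R_\theta\cdot(R_{-\theta}D)$, where $R_{-\theta}D$ is checked to have unimodular complex eigenvalues via the trace condition $|\cos\theta|<2/(\eta+\eta^{-1})$; the constraint $1-\cos\theta\le|\eta-1|$ is what produces the exponent $1/2$ in the statement.

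Second, your inverse-function-theorem repair of the BCH error requires the differential of $(s_1,\dots,s_N)\mapsto\prod_\ell\exp(s_\ell G_\ell)$ at the origin, namely $(s_\ell)\mapsto\sum_\ell s_\ell G_\ell$, to be surjective onto $\mathfrak{sp}(2d,\Rr)$, which forces $N\ge\dim\Sp(2d,\Rr)=d(2d+1)$. Since $d(2d+1)>4d$ for $d\ge2$, a family of only $4d$ factors with \emph{fixed} $P_\ell$ and $\xi_\ell$ sweeps out a positive-codimension (indeed measure-zero) subset near $I$ and cannot realize an arbitrary nearby $A$; so either the count $4d$ in the statement is lost or the fix-up step fails. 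The paper sidesteps this because its conjugators and rotation planes are not fixed in advance: they are extracted from $A$ itself by diagonalizing $L_1^{-1}A$ (Lemmas~\ref{lemma:diag} and~\ref{lemma:diagsymp}), so each of the $4d$ factors carries the necessary $A$-dependent parameters. You would need either to increase $N$ to at least $d(2d+1)$ (changing the statement) or to let the conjugators depend on $A$, at which point you are essentially back to the paper's diagonalization argument.
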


\begin{proof}
Our goal is first to write the matrix $A$ as a product of diagonal and diagonalizable matrices. 
Then we will show that diagonal matrices can be written as the product of symplectic rotations in $\Rot_2(2d,\Rr)$ up to symplectic linear conjugacy.

Write $A=L_1 B$, where $B=L_1^{-1} A$ and 
$$
L_1=\mathrm{diag}(\lambda_1,\dots,\lambda_d,\lambda_1^{-1},\dots,\lambda_d^{-1})\in\Sp(2d,\Rr)
$$
with distinct eigenvalues and $|\lambda_i-1|\leq \|A-I\|$. 
 If $\|A-I\|$ is sufficiently small then by Lemma~\ref{lemma:diagsymp} we have $PL_2P^{-1}$ where  $L_2=\mathrm{diag}(\tilde\lambda_1,\dots,\tilde\lambda_d,\tilde\lambda_1^{-1},\dots,\tilde\lambda_d^{-1})\in\Sp(2d,\Rr)$ and $P$ is a symplectic matrix close to the identity. The eigenvalues verify $|\tilde\lambda_i-1|\leq c_1\|A-I\|$ with a constant $c_1>0$.

Any invertible diagonal matrix $L=\mathrm{diag}(\eta_1,\dots,\eta_d,\eta_1^{-1},\dots,\eta_d^{-1})\in\Sp(2d,\Rr)$ can be written as the product of $d$ diagonal symplectic matrices each being essentially two-dimensional:
\begin{equation}\label{matrix:L}
L=\prod_{i=1}^d \pi_i (\mathrm{diag}(\lambda_i,\lambda_i^{-1})).
\end{equation}
This means that we can reduce our setting to two-dimensions to deal with such decompositions for $L_1$ and $L_2$ as given above (corresponding to a total of $2d$ diagonal matrices).

Consider $D=\left(\begin{smallmatrix}\eta_i&0\\0&\eta_i^{-1}\end{smallmatrix}\right)\in\Sp(2,\Rr)=\SL(2,\real)$, where $|\eta_i-1|\leq \max\{1,c_1\} \|A-I\|$, and  $R_\theta\in\SO(2,\real)$ the rotation matrix by an angle $\theta\in[0,\frac\pi{2}]$ with
\begin{equation}\label{equ:theta}
\frac{(\eta_i-1)^2}{\eta_i^2+1}<1-\cos\theta \leq |\eta_i-1|.
\end{equation}

Writing $D=R_\theta R_{-\theta} D$ and recalling that $\pi(D)=\pi_k(R_\theta)\,\pi_k(R_{-\theta}D)$, it remains to show that $R^{-1}_\theta D$ is conjugated to a rotation.
Indeed,  if
$$
|\cos\theta| < \frac2{\eta_i+\eta_i^{-1}},
$$
in which our $\theta$ satisfies because of first inequality in \eqref{equ:theta}, then the characteristic polynomial of
\[R^{-1}_\theta D=
\left(\begin{matrix}
\eta_i\cos\theta&\eta_i^{-1}\sin\theta\\-\eta_i\sin\theta&\eta_i^{-1}\cos\theta
\end{matrix}\right),\]
has complex roots $(\cos\xi\pm\i\sin\xi)$, where $\xi\in[0,\frac\pi{2}]$ is chosen such that 
$$
\cos \xi= \frac{\eta_i+\eta_i^{-1}}2\cos\theta.
$$
A simple calculation shows that $R^{-1}_\theta D=P_iR_\xi P_i^{-1}$, where 
$$
P_i=\frac1{\sqrt{(\eta_i^{-1}\sin\theta\sin\xi)}}\begin{pmatrix}\eta^{-1}_i\sin\theta&0\\\cos\xi-\eta_i\cos\theta&\sin\xi\end{pmatrix}
$$
The matrix $P_i$ has determinant 1, so $\pi_k(P_i)$ is symplectic. Notice  that $(\eta_i^{-1}\sin\theta\sin\xi)^{1/2}$, $(\eta_i^{-1}\sin\theta)$, $\sin\xi$ and $(\cos\xi-\eta_i\cos\theta)$ go to zero with equal rates when 
$$
0<\theta<\xi\to 0.
$$ 
That is, $P_i$ is close to $\left(\begin{smallmatrix}1&0\\1&1\end{smallmatrix}\right)$ so $\|P_i^\pm\|$ is bounded from above by some constant $c$.
The conjugating matrices $P_m$ are of the form $P_iP$ where $P$ is the close to identify symplectic matrix given by Lemma \eqref{lemma:diagsymp}, so  $P_m\in\Sp(2d,\Rr)$ and  $\|P_m^{\pm}\|\leq c$.

Notice that $(\eta_i+\eta_i^{-1})/2\geq1$ for every $\eta_i>0$, so $1-\cos\xi<1-\cos\theta<|\eta_i-1|$.
Finally, there is constant $c'$ such that for any $k=1,\dots,d$,
$$
\|\pi_k(R_\xi)-I\| = 1-\cos\xi + \sqrt{1+\cos\xi} \sqrt{1-\cos\xi}
\leq c'|\lambda-1|^{1/2}.
$$
 
\end{proof}


\section{Proof of Theorem~\ref{thm:Frankspo}}
\label{sec:proof1}

We want to construct a perturbation $g$ of $f$ around $p\in M$ which realizes a matrix $\hat{A}_\pi D_pf$ close to $D_pf$.
We choose 
$$
g=\varphi^{-1}\circ h\circ\varphi\circ f,
$$
where $(U,\varphi)$ is a local chart (from the splitting theorem) at $f(p)$ with $\varphi(f(p))=0$, 
and $h$ is a Poisson diffeomorphism of $\Rr^{2d+n}$ that fixes the origin. 
Therefore, $D_0h=C\,D_pg\,D_pf^{-1} \, C^{-1}$ with $C=D_{f(p)}\varphi$ Poisson.
Let 
$$
A_\pi
=C\,(\hat{A}_\pi D_pf) \,D_pf^{-1}\,C^{-1} 
=I+C\, \hat{A}_\pi \, C^{-1}.
$$
By Lemma~\ref{lemma:decomposition symp} we write $A=A_1\dots A_{4d}$ with 
$A_k=P_k R_{\alpha_k} P_k^{-1}$.
We want to use Lemma~\ref{lemma K} for each $k$ by constructing $K_k$ as in \eqref{Ham K} for $\alpha_k$ corresponding to a rotation in the coordinates $i(k)$ and $d+i(k)$.
This guarantees the existence of Poisson map $h_k=(P_k)_\pi\circ\varphi_{K_k}^1\circ (P_k)_\pi^{-1}$ fixing the origin and satisfying $D_0h_k=(A_k)_\pi$.
So, the choice $h=h_1\circ \dots\circ h_{4d}$ implies that $D_0h=A_\pi$.
Furthermore, 
\begin{equation*}
\begin{split}
h-\id &= \sum_{k=1}^{4d}(h_k-\id)\circ h_{k+1}\circ\dots\circ h_{4d}, \\
Dh-I &= \sum_{k=1}^{4d}(Dh_k -I)\circ h_{k+1}\circ\dots\circ h_{4d}.
\end{split}
\end{equation*}
Notice that, by the integral formula $\varphi_{K}^t-\id=\int_0^tX_{K}\circ\varphi_{K}^s\,ds$,
$$
\|\varphi_K^t-\id\|_{C^0}\leq \|K\|_{C^1}.
$$
Take $T$ such that $\|D\varphi_K^T-I\|_{C^0}=\max_{s}\|D\varphi_K^s-I\|_{C^0}$.
Then, from $D\varphi_{K}^t-I=\int_0^tDX_{K}\circ\varphi_{K}^s(D\varphi_{K}^s-I)\,ds+\int_0^tDX_{K}\circ\varphi_{K}^s\,ds$, one gets
$$
\|D\varphi_K^T-I\|_{C^0}\leq \|DX_{K}\|_{C^0} \|D\varphi_K^T-I\|_{C^0} + \|DX_{K}\|_{C^0} 
$$
and
$$
\|D\varphi_K^1-I\|_{C^0}\leq \frac{\|K\|_{C^2}}{1-\|K\|_{C^2}}.
$$
We obtain the following estimates from Lemmas~\ref{lemma K} and~\ref{lemma:decomposition symp},
$$
\|h-\id\|_{C^1} \leq \sum_{k=1}^{4d}\|h_k-\id\|_{C^1} 
\leq \sum_{k=1}^{4d} \frac{c_1|\alpha_k|}{1-c_2|\alpha_k|}
\leq
c_3 \delta^{1/2}
$$
as long as $\delta$ is small enough.
Notice that $|\alpha_k| \leq c_4 \|R_{\alpha_k}-I\|$ for some constant $c_4>0$ whenever $|\alpha_k|$ is close to zero.

Finally,
\begin{equation*}
\begin{split}
\|g-f\|_{C^0} &= \| (\varphi^{-1}\circ h-\varphi^{-1})\circ\varphi \circ f\|_{C_0} \\
&\leq \|D\varphi^{-1}\|_{C^0} \| h-\id\|_{C^0}
\end{split}
\end{equation*}
and
\begin{equation*}
\begin{split}
\|Dg-Df\|_{C^0} &= \| (Dg\,(Df)^{-1}-I)Df\|_{C_0} \\
&\leq \|D\varphi^{-1}\|_{C^0} \|D\varphi\|_{C^0} \|Df\|_{C^0}   \| Dh-I\|_{C^0}.
\end{split}
\end{equation*}
Hence,
\begin{equation*}
\begin{split}
\|g-f\|_{C^1} &=\max\left\{ \|g-f\|_{C^0},\|Dg-Df\|_{C^0} \right\} \\
&\leq \max\left\{ 1, \|D\varphi\|_{C^0} \|Df\|_{C^0} \right\}   \|D\varphi^{-1}\|_{C^0} \| h-\id\|_{C^1}.
\end{split}
\end{equation*}
This is less than $\varepsilon$ as long as $\delta$ is small enough.


\section{Hamiltonian flows in Poisson manifolds}\label{sec:Hamiltonian}

Consider $\Rr^{2d+n}$ equipped with the Poisson structure $\pi_0$.
The Hamiltonian 
$$
H_0(x,y,z)=y_{1},
\quad
x\in\Rr^d,y\in\Rr^d,z\in\Rr^n,
$$
corresponds to the constant vector field $X_{H_0}=(1,0,\dots,0)$ and the flow 
$$
\varphi_{H_0}^t=\id + t X_{H_0}.
$$
Therefore, it has the orbit segment $\Gamma_0=[0,1]\times\{0\}$ corresponding to the orbit of the origin.

Fix the transversals $\Sigma_0=\{x_1=0,y_1=0,z=0\}$ and $\Sigma'_0=\{x_1=1,y_1=0,z=0\}$
at the edges of $\Gamma_0$.
The Poincaré map $P_{H_0}\colon\Sigma_0\to\Sigma'_0$ is $P_{H_0}=\id+(1,0,\dots,0)$.

Given $r>0$ define
$$
V_{r}=\{(x,y,z)\in\Rr^{2d+n}\colon 0\leq x_1\leq 1, |y_{1}|< r, \|(\hat x,\hat y,z)\| < r\}.
$$
Here and in the following we use the notations $\hat x=(x_2,\dots,x_{d+1})$ and $\hat y=(y_{2},\dots,y_{d+1})$.

\begin{proposition}\label{local Franks Ham}
For any $\varepsilon>0$ there is $\delta>0$ such that
$$
\{A\in \Sp(2d,\Rr)\colon \|A-I\|<\delta\}
\subset
\{D_pP_{H}\in\Sp(2d,\Rr)\colon H\in B_\varepsilon(H_0,D)\},
$$
where $D=(\Rr^{2d+2+n}\setminus B_\varrho)\cup \Gamma_0$ for any open ball $B_\varrho$ centered at the origin with radius $\varrho>0$.
\end{proposition}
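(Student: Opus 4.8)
The plan is to reduce Proposition~\ref{local Franks Ham} to the rotation-realization machinery of Lemma~\ref{lemma K} and the symplectic decomposition of Lemma~\ref{lemma:decomposition symp}, exactly as in the proof of Theorem~\ref{thm:Frankspo}, but now carrying along the extra ``time'' direction $(x_1,y_1)$ that is swept out by the flow of $H_0$. First I would set up coordinates so that the Poincar\'e map $P_{H_0}=\id+(1,0,\dots,0)$ acts on $\Sigma_0=\{x_1=y_1=0,z=0\}\cong\Rr^{2d}$, with conjugate coordinates $(\hat x,\hat y)=(x_2,\dots,x_{d+1},y_2,\dots,y_{d+1})$ carrying the canonical symplectic form $\omega_0$. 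Given $A\in\Sp(2d,\Rr)$ with $\|A-I\|<\delta$, apply Lemma~\ref{lemma:decomposition symp} to write $A=A_1\cdots A_{4d}$ with $A_k=P_kR_{\alpha_k}P_k^{-1}$, $R_{\alpha_k}\in\Rot_2(2d,\Rr)$, $\|P_k^{\pm}\|\le c$ and $\|R_{\alpha_k}-I\|<c\|A-I\|^{1/2}$; hence $|\alpha_k|\le c'\delta^{1/2}$.

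The key construction is the perturbed Hamiltonian. For each factor $A_k$ I would use the embedding $\Phi$ of \eqref{def:Phi} to view $(P_k)_\pi$ as a symplectic change of coordinates in the $2d$-dimensional transverse directions that leaves the $(x_1,y_1,z)$ block alone, and then build, in the $\Phi$-conjugated coordinates, a Hamiltonian of the form
$$
\widehat K_k(x,y,z)=H_0(x,y,z)+\beta(x_1)\,K_{i(k)}\bigl(\text{transverse coords}\bigr),
$$
where $K_{i(k)}$ is the function from \eqref{Ham K} with parameter $\alpha_k$ realizing a rotation in the conjugate pair $(i(k),d+i(k))$ of the transverse variables, and $\beta\in C^\infty(\Rr)$ is a bump supported in a small subinterval of $(0,1)$ with $\int\beta=1$, with the subintervals for distinct $k$ chosen disjoint. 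Because $H_0$ is the linear Hamiltonian $y_1$, its flow simply translates $x_1\mapsto x_1+t$; along an orbit segment the accumulated effect of the perturbation term on the transverse block is $\int_0^1\beta(x_1(s))\,ds=1$ times the time-one rotation flow of $K_{i(k)}$. One must check the term $\beta(x_1)K_{i(k)}$ does not destroy the constant-of-motion structure used in Lemma~\ref{lemma K}: since $K_{i(k)}$ depends only on the transverse variables and $\beta$ only on $x_1$, the bracket $\{y_1,\beta(x_1)K_{i(k)}\}=\beta'(x_1)K_{i(k)}$, so the flow is not simply a product; the cleaner route is to insert the rotations one interval at a time, i.e.\ compose the perturbations $\widehat K_1,\dots,\widehat K_{4d}$ using disjoint $x_1$-intervals so that inside each interval $\beta\equiv\text{const}$ can be arranged, or, better, to transport the whole construction through a flowbox so that the transverse rotation is applied on a slab where $x_1$ is genuinely a clock variable. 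Summing the $C^2$-estimate $\|\widehat K_k-H_0\|_{C^2}\le c|\alpha_k|\le c''\delta^{1/2}$ over the $4d$ factors and choosing $\delta$ small gives $H'\in B_\varepsilon(H_0,D)$ with $D=(\Rr^{2d+2+n}\setminus B_\varrho)\cup\Gamma_0$, since each $K_{i(k)}$ vanishes outside $B_1$ (rescale so the support lies in $B_\varrho$) and on $\Gamma_0=[0,1]\times\{0\}$ the transverse variables are zero, so $X_{H'}=X_{H_0}$ there.

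Finally, I would compute $D_pP_{H'}$: conjugating back by $\Phi(P_k)^{-1}$ and using Lemma~\ref{lemma K}\eqref{claim 1} (with $t=1$, $\rho_i$ evaluated near the orbit where $\ell\equiv1$, so $\vartheta_{i(k)}=1$) the Poincar\'e map of the $k$-th perturbation restricted to $\Sigma_0$ is $P_k R_{\alpha_k}P_k^{-1}$ composed with the translation, and composing the $4d$ pieces yields $D_pP_{H'}=A_1\cdots A_{4d}\,D_pP_{H_0}=A$, as required. The main obstacle is the bracketing point flagged above: ensuring that adding the localized rotation term to the clock Hamiltonian $H_0$ really produces, on the transversal, exactly the intended symplectic rotation and not a rotation distorted by the $x_1$-dependence of the cutoff $\beta$; handling this by a disjoint-interval / slab-by-slab composition (or by the flowbox normal form) is the crux, and everything else is a routine repackaging of the estimates already carried out for Theorem~\ref{thm:Frankspo}.
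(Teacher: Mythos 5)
Your strategy is the same as the paper's: decompose $A=A_1\cdots A_{4d}$ via Lemma~\ref{lemma:decomposition symp}, realize each conjugated rotation $P_kR_{\alpha_k}P_k^{-1}$ by adding to $H_0$ a term of the form (cutoff in $x_1$)$\times K_{i(k)}(\hat x,\hat y,z)$ composed with $\Phi(P_k)$, stack the $4d$ perturbations on disjoint $x_1$-subintervals of $[0,1]$, and rescale to fit inside $B_\varrho$. This is exactly the construction of Lemmas~\ref{lemma:PH=R} and~\ref{lemma:combination}. However, the step you explicitly flag as ``the crux'' and leave unresolved is precisely the content of Lemma~\ref{lemma:PH=R}, and it does not actually require any further trick beyond what you already wrote: one integrates the equations of motion directly. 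Since $K_{i(k)}$ depends only on the transverse variables and the cutoff only on $x_1$ (and $y_1$), the transverse block satisfies $(\dot x_i,\dot y_i)=\alpha\,\ell(2x_1-1)(y_i,-x_i)$ with $x_1(t)=x_1+t$ and $\rho_i=(x_i^2+y_i^2)/2$ conserved; hence the accumulated rotation angle is exactly $\theta=\alpha\int_0^1\ell(2s-1)\,ds$, independent of the $y_1$-drift. The drift itself, $\dot y_1=-\alpha\,\ell'(2x_1-1)\,\rho_i$, integrates to $y_1(t)=y_1-\alpha[\ell(2x_1+2t-1)-\ell(2x_1-1)]\rho_i$, which is $O(r^2|\alpha|)$ on $V_r\cap\Sigma_0$ and therefore keeps the orbit inside the region where all cutoffs are identically $1$; moreover it vanishes again at $t=1$ (the bracket of cutoff values is zero at the endpoints), so the orbit returns to $\Sigma_0'$ and the Poincar\'e map is the pure rotation. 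You should carry out this computation rather than deferring it, since without it the claim $D_pP_{H'}=A$ is not established.

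A second, smaller gap: your perturbation $\beta(x_1)K_{i(k)}(\hat x,\hat y,z)$ is not compactly supported --- its support is an unbounded slab in the $y_1$-direction, so no rescaling will place it inside $B_\varrho$, and the condition $X_{H'}=X_{H_0}$ on $\Rr^{2d+2+n}\setminus B_\varrho$ fails. The paper inserts the extra factor $\ell(y_1)$ in \eqref{tilde H} precisely for this reason; you need it too, and you then need the drift estimate above to guarantee that this extra cutoff stays equal to $1$ along the relevant orbits so that it does not disturb the transverse dynamics. With these two points supplied, your argument coincides with the paper's proof.
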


\subsection{Proof of Proposition~\ref{local Franks Ham}}

Given $\alpha\in\Rr$ and $1\leq i\leq d$.
Consider the $C^\infty$ function $\widetilde H\colon\Rr^{2d+2+n}\to\Rr$,
\begin{equation}\label{tilde H}
\widetilde H(x,y,z) = 
\ell(2x_1-1) 
\ell(y_{1})
K_i(\hat x,\hat y,z),
\end{equation}
where $\ell$ is the bump function defined at~\eqref{def:ell}.
We write the function $K_{i}$ as $K_i$ in~\eqref{Ham K} rotating the coordinates $i$ and $d+i$.

\begin{lemma}\label{lemma:PH=R}
Let $H=H_0+\widetilde H$.
Then, 
\begin{enumerate}
\item $X_{H}=X_{H_0}$ on $(\Rr^{2d+n} \setminus V_1)\cup\Gamma_0$,
\item if $0<r\leq1/4$, $P_H\colon V_r\cap\Sigma_0\to\Sigma'_0$ is given by $P_H=\pi_i(R_\alpha)$,
\item $\|H-H_0\|_{C^2} \leq c|\alpha| $ for some constant $c>0$.
\end{enumerate}
\end{lemma}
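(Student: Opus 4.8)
The plan is to compute the Hamiltonian vector field of $H = H_0 + \widetilde H$ explicitly and exploit the product structure of the cutoffs. Writing $X_H = \hat\Jj\nabla H = X_{H_0} + X_{\widetilde H}$, the first claim follows from the observation that $\widetilde H$ is a product $\ell(2x_1-1)\,\ell(y_1)\,K_i(\hat x,\hat y,z)$: outside $V_1$ at least one of the three factors vanishes together with its derivative (using $|y_1|\geq 1$, or $x_1\notin[-\tfrac14+\tfrac12\cdot\ldots]$ — more precisely $2x_1-1\notin[-1,1]$ fails only when $x_1\notin[0,1]$, or $\|(\hat x,\hat y,z)\|\geq 1$), so $\nabla\widetilde H=0$ there; and on $\Gamma_0 = [0,1]\times\{0\}$ we have $\hat x=\hat y=z=0$, hence $K_i$ and $\nabla K_i$ vanish (since $K_i$ and its gradient vanish at the origin of $\Rr^{2d+n}$, as $\rho_i$ is quadratic and $K_i = \alpha\ell(\rho)\rho_i$). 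In both cases $X_{\widetilde H}=0$, giving claim~(1).

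For claim~(2), restrict attention to the tube $V_r$ with $r\leq 1/4$. Inside $V_r$ one has $|y_1|\leq r\leq 1/4$, so $\ell(y_1)=1$ and $\partial_{y_1}\ell(y_1)=0$; likewise $\|(\hat x,\hat y,z)\|\leq r\leq 1/4$ gives $\rho\leq r^2/2\leq 1/4$ in the $K_i$-variables, so by the Remark following Lemma~\ref{lemma K}, $\varphi_{K_i}^t = \pi_i(R_{t\alpha})$ on that region and $K_i$ reduces to $\tfrac12\alpha(\hat x_i^2+\hat y_i^2)$. Thus on $V_r$ the Hamiltonian is $H = y_1 + \ell(2x_1-1)\cdot\tfrac12\alpha(\hat x_i^2+\hat y_i^2)$, and the vector field becomes
\begin{equation*}
\dot x_1 = 1,\quad \dot y_1 = -\alpha\,\ell'(2x_1-1)(\hat x_i^2+\hat y_i^2),\quad (\dot{\hat x}_i,\dot{\hat y}_i) = \alpha\,\ell(2x_1-1)(-\hat y_i,\hat x_i),
\end{equation*}
all other components zero. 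The key point is that along an orbit starting on $\Sigma_0$ (where $x_1=0$) we have $x_1(t)=t$, so $\ell(2x_1(t)-1)=\ell(2t-1)$ and the $(\hat x_i,\hat y_i)$-block undergoes a planar rotation by the total angle $\alpha\int_0^1 \ell(2t-1)\,dt = \alpha\cdot\tfrac12\int_{-1}^1\ell(s)\,ds = \tfrac{\alpha}{2}\cdot\ldots$ — here I would use the normalization $\int\ell = 1$ to get exactly $\alpha$ (note the factor $2$ from $2x_1-1$ against $\int\ell=1$ gives total angle $\alpha\cdot\tfrac12\cdot 1\cdot 2 = \alpha$; this bookkeeping with the constant in $\ell$ should be checked). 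The transit time to reach $\Sigma'_0=\{x_1=1\}$ is exactly $\tau=1$ since $\dot x_1\equiv1$, and $y_1$ returns to its starting value because $\int_0^1\ell'(2t-1)\,dt$ multiplied against the conserved $\hat x_i^2+\hat y_i^2$ integrates the total derivative of $\ell$ over a symmetric interval (so $y_1(1)-y_1(0) = -\tfrac{\alpha}{2}(\hat x_i^2+\hat y_i^2)[\ell(1)-\ell(-1)] = 0$); also $\hat x_i^2+\hat y_i^2$ is conserved since the $(\hat x_i,\hat y_i)$-flow is a rotation. Hence $P_H$ maps $\Sigma_0\cap V_r$ into $\Sigma'_0$ and equals $\pi_i(R_\alpha)$ on it. Claim~(3) follows from $\|\widetilde H\|_{C^2}\leq \|\ell\|_{C^2}^2\,\|K_i\|_{C^2}\cdot(\text{const})$ by the Leibniz rule, combined with $\|K_i\|_{C^2}\leq c|\alpha|$ from Lemma~\ref{lemma K}(2); the chain-rule factors from $2x_1-1$ contribute only fixed constants, all absorbed into $c$.

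The main obstacle is claim~(2): one must verify that the orbit does not leave the region $\{|y_1|\leq 1/4\}\cup\{\ell(2x_1-1)\text{-support}\}$ before reaching $\Sigma'_0$, so that the simplified formulas remain valid for the whole transit, and that $\tau(x)\equiv 1$ with no earlier intersection with $\Sigma'_0$. The first follows because $\hat x_i^2+\hat y_i^2\leq r^2$ is conserved and $|\dot y_1|\leq |\alpha|\|\ell\|_{C^1} r^2$ is $O(|\alpha|r^2)$, so for $\delta$ small $y_1$ stays below $1/4$ over the unit time interval; the second is immediate from $x_1(t)=t$ being strictly monotone. I would also note that shrinking to any prescribed $B_\varrho$ is free: replacing $\ell(2x_1-1)\ell(y_1)K_i(\hat x,\hat y,z)$ by a rescaled version supported in $B_\varrho$ changes only the constants, and the conclusion $P_H=\pi_i(R_\alpha)$ on the smaller tube is unaffected. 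With this lemma in hand, Proposition~\ref{local Franks Ham} follows by the same decomposition-into-rotations argument as in the proof of Theorem~\ref{thm:Frankspo}, composing the maps $H_0+\widetilde H$ for the $4d$ factors $R_{\alpha_k}$ supplied by Lemma~\ref{lemma:decomposition symp}.
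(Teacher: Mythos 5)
Your proposal is correct and follows essentially the same route as the paper's proof: compute $X_H$ explicitly on $V_r$, integrate the flow using conservation of $x_i^2+y_i^2$, observe that $x_1(t)=t$ so the transit time is $1$ and $y_1$ returns to $0$ because $\ell(\pm1)=0$, and bound $\|H-H_0\|_{C^2}$ via the product structure of the cutoffs. The factor-of-two bookkeeping you flag in $\theta(1,0)=\alpha\int_0^1\ell(2s-1)\,ds=\tfrac{\alpha}{2}\int_{-1}^{1}\ell$ is a genuine normalization quibble that is present (unflagged) in the paper as well, and is harmless since one may rescale $\alpha$.
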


\begin{proof}
For $(x,y)\in V_{r}$ with $r\leq1/4$, the Hamiltonian equations of motion $(\dot x,\dot y,\dot z)=X_{H}(x,y,z) = \hat\Jj \nabla H(x,y,z)$ are
\begin{equation*}
\begin{split}
\dot x_1 &=1 \\
\dot y_{1} &=- \alpha \ell'(2x_1-1) (x_{i}^2+y_{i}^2) \\
\begin{pmatrix}\dot x_{i} \\ \dot y_{i} \end{pmatrix} &=
\alpha \, \ell(2x_1-1)
\begin{pmatrix} y_i \\ -x_{i} \end{pmatrix} 
\end{split}
\end{equation*}
and $\dot x_j=\dot y_j=\dot z_k=0$ for $j\neq i$ and $k=1,..,n$.
It is easy to check that $\frac d{dt}(x_i^2+y_{i}^2)=0$.
So, on this domain, the Hamiltonian flow $(x(t),y(t))=\varphi_{H_i}^t(x,y)$ is 
\begin{equation*}
\begin{split}
x_1(t) &= x_1+t \\
y_{1}(t) &=  y_{1}-\alpha\,\left[\ell(2x_1+2t-1)-\ell(2x_1-1)\right] \, \frac{(x_{i}^2+y_{i}^2)}{2}\\
\begin{pmatrix} x_{i}(t) \\ y_{i}(t) \end{pmatrix} &=
R_{\theta(t,x_1)}
\begin{pmatrix} x_i \\ y_{i} \end{pmatrix},
\end{split}
\end{equation*}
where $\theta(t,x_1)=\alpha \int_0^t\ell(2x_1+2s-1)ds$, while in the remaining coordinates the flow is constant.

For $(x,y,z)\in V_r\cap \Sigma_0$ we have $y_{1}=0$, so $|y_{1}(t)|\leq r^2 |\alpha| < r$ if $r<|\alpha|^{-1}$ and  $\varphi_{H}^t(V_{r}\cap \Sigma_0)\subset V_{r}$, and for $(0,\hat x,0,\hat y,0)\in V_r\cap \Sigma_0$ we have $\varphi_H^1(0,\hat x,0,\hat y,0)=(1,\hat x_\alpha,0,\hat y_\alpha,0),$ where $(\hat x_\alpha,\hat y_\alpha)=\pi_i(R_\alpha)(\hat x,\hat y)$.
In particular, $\varphi_{H}^t(0)=(t,0)$ implies that $\Gamma_0$ is an orbit segment of $H$ with transversals $\Sigma_0$ and $\Sigma'_0$ at the edges.
Therefore, whenever $(x,y,z)\in V_{r}\cap \Sigma_0$ we have $\varphi^1_{H}(x,y,z)\in\Sigma'_0$.  So, $P_H\colon  V_{r}\cap \Sigma_0\to\Sigma'_0$ just acts on the coordinates $i$ and $d+i$ by rotating an angle $\theta(1,0)=\alpha$ i.e. $P_H(\hat x,\hat y)=\pi_i(R_\alpha)(\hat x, \hat y)$.

Finally, we need to estimate the $C^2$-norm of the perturbation.
It is simple to check that
\begin{equation}\label{estimate:1}
\|H-H_0\|_{C^1} = \max\{ \|H-H_0\|_{C^0} , \| X_H-X_{H_0}\|_{C^0} \} \leq r|\alpha| \,\|\ell\|_{C^1}.
\end{equation}
Writing $\rho=\|(\hat x,\hat y,z)\|^2/2$ and $\rho_i=(x_i^2+y_i^2)/2$, the second order derivatives are the following:
\begin{equation*}
\begin{split}
\frac{\partial^2H}{\partial x_1^2} &=4\alpha \ell''(2x_1-1)\ell(y_1) \ell(\rho) \rho_i \\
\frac{\partial^2H}{\partial x_1\partial y_1} &=2\alpha \ell'(2x_1-1)\ell'(y_1) \ell(\rho) \rho_i \\
\frac{\partial^2H}{\partial x_1\partial w} &=2\alpha \ell'(2x_1-1)\ell(y_1) [\ell'(\rho) \rho_i+\ell(\rho)] w\\
\frac{\partial^2H}{\partial x_1\partial u} &=2\alpha \ell'(2x_1-1)\ell(y_1)\ell'(\rho)\rho_i u\\
\frac{\partial^2H}{\partial y_1^2} &=\alpha \ell(2x_1-1)\ell''(y_1)\ell(\rho)\rho_i \\
\frac{\partial^2H}{\partial y_1\partial w} &=\alpha \ell(2x_1-1)\ell'(y_1)[\ell'(\rho)\rho_i+\ell(\rho)]w \\
\frac{\partial^2H}{\partial y_1\partial u} &=\alpha \ell(2x_1-1)\ell'(y_1)\ell'(\rho)\rho_i u
\end{split}
\end{equation*}
and
\begin{equation*}
\begin{split}
\frac{\partial^2H}{\partial x_i\partial y_i}& =\alpha \ell(2x_1-1)\ell(y_1) [\ell''(\rho)\rho_i+2\ell'(\rho)] x_iy_i \\
\frac{\partial^2H}{\partial w^2} &=\alpha \ell(2x_1-1)\ell(y_1) [\ell'(\rho)\rho_i+\ell(\rho) +\ell''(\rho)\rho_i w^2+2\ell'(\rho)w^2] \\
\frac{\partial^2H}{\partial w\partial u}& =  \alpha \ell(2x_1-1)\ell(y_1) [\ell'(\rho)\rho_i+\ell(\rho)]wu \\
\frac{\partial^2H}{\partial u^2}& = \alpha \ell(2x_1-1)\ell(y_1) [\ell(\rho) +\ell'(\rho)u^2]\rho_i \\
\frac{\partial^2H}{\partial u\partial v}& = \alpha \ell(2x_1-1)\ell(y_1) \ell'(\rho)\rho_i uv, 
\end{split}
\end{equation*}

where $w$ stands for $x_i$ or $y_i$, $u$ and $v$ replace $x_j$, $y_j$ or $z_k$ with $j\not=i$ and $u\not=v$. So, for some constant $c>0$,
\begin{equation}\label{estimate:2}
\begin{split}
\|D^2(H-H_0)\|_{C^0} & \leq c |\alpha| \, \|\ell\|_{C^2},
\end{split}
\end{equation}
in which together with \eqref{estimate:1} yields part (3) of the lemma. 

\end{proof}

Consider a finite set of matrices $A_k=P_kR_kP_k^{-1}\in\Sp(2d,\Rr)$, $k=1,\dots,N$, with $P_k\in\Sp(2d,\Rr)$, $R_k=\pi_{i(k)}(R_{\alpha_k}),$  $\alpha_k\in\Rr$ and $1\leq i(k)\leq d$.
We write $\widetilde H_{i(k)}$ for the Hamiltonian in~\eqref{tilde H} for a function $K_{i(k)}$.

\begin{lemma}\label{lemma:combination}
There is $0<r<1$ such that
$$
H=H_0 + N\sum_{k=1}^{N} \widetilde H_{i(k)}\circ \Phi(P_{k}) \circ T_{k},
$$
where $T_{k}(x,y)=(N x_1-k+1,x_{2},\dots,x_d,y_1,\dots,y_{d})$ and $\Phi$ is defined at \eqref{def:Phi},
verifies 
\begin{enumerate}
	\item $X_{H}=X_{H_0}$ on $(\Rr^{2d+n}\setminus V_1)\cup \Gamma_0$,
	\item $P_H\colon V_r\cap\Sigma_0\to \Sigma'_0$, $P_H=A_{N}\cdots A_1$,
	\item $\|H-H_0\|_ {C^2} \leq c\,N^2\, \max_{k=1,\dots,N}\{(\max\{1,\|P_k\|^2\})|\alpha_k|\}$, for some constant $c>0$.
\end{enumerate}
\end{lemma}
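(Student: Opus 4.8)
The plan is to concatenate $N$ copies of the single-rotation construction of Lemma~\ref{lemma:PH=R}, each living in a different slice $\{(k-1)/N \le x_1 \le k/N\}$ of the flowbox, and to conjugate the $k$-th copy by the symplectic matrix $P_k$ so that it realizes $A_k = P_k R_k P_k^{-1}$ rather than the bare rotation $R_k$. The affine map $T_k(x,y)=(Nx_1-k+1,x_2,\dots,x_d,y_1,\dots,y_d)$ rescales the $k$-th slice onto the standard unit-length flowbox on which Lemma~\ref{lemma:PH=R} applies; precomposing $\widetilde H_{i(k)}$ with $\Phi(P_k)$ builds in the symplectic conjugation in the transverse directions, noting that $\Phi(P_k)$ fixes the $x_1$ and $y_1$ coordinates (so it commutes with the flowbox structure) and is Poisson. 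The overall factor $N$ in front of $\widetilde H_{i(k)}$ compensates for the time-reparametrization: since $T_k$ contracts $x_1$ by a factor $N$, the flow must be sped up by $N$ so that the transverse rotation is completed over the slice of $x_1$-length $1/N$.

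The key steps, in order. First I would check the support and orbit-segment properties (1): each summand $\widetilde H_{i(k)}\circ\Phi(P_k)\circ T_k$ vanishes, together with its gradient, outside the $k$-th sub-slice of $V_1$, because $\ell(2(Nx_1-k+1)-1)$ is supported in that slice and $\widetilde H$ vanishes off $V_1$; along $\Gamma_0$ (where $\hat x=\hat y=z=0$ and $y_1=0$) every $K_{i(k)}$ and hence every summand has vanishing $1$-jet, so $X_H=X_{H_0}$ there. Moreover $X_H(t,0)=(1,0,\dots,0)$, so $\Gamma_0$ is still an orbit segment with the same transversals. Second, for the Poincar\'e map (2): fix $r\le 1/4$ small enough that the flow starting in $V_r\cap\Sigma_0$ stays inside $V_r$ across all $N$ slices (this is where one needs $r$ small relative to $N|\alpha_k|$, controlling the drift of $y_1$ slice by slice, exactly as the bound $|y_1(t)|\le r^2|\alpha|$ was used in Lemma~\ref{lemma:PH=R}). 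On the $k$-th slice the orbit, after the change of variables $T_k$ and the conjugation $\Phi(P_k)$, undergoes the time-one map of $N\widetilde H_{i(k)}$ reparametrized, which by Lemma~\ref{lemma:PH=R}(2) acts on the transverse coordinates as $\pi_{i(k)}(R_{\alpha_k})=R_k$; undoing the conjugation gives the transverse action $P_k R_k P_k^{-1}=A_k$. Composing the $N$ slices in order yields $P_H=A_N\cdots A_1$ on $V_r\cap\Sigma_0$. Third, for the $C^2$ estimate (3): apply Lemma~\ref{lemma:PH=R}(3) to each summand, pick up a factor $\max\{1,\|\Phi(P_k)\|\}\le\max\{1,\|P_k\|\}$ from the chain rule through $\Phi(P_k)$ for first derivatives and its square for second derivatives, a factor $N$ from differentiating through $T_k$ (again squared for second derivatives, but $\|T_k\|$-type constants are absorbed), and the explicit factor $N$ in front; summing $N$ terms gives the stated bound $c\,N^2\max_k\{(\max\{1,\|P_k\|^2\})|\alpha_k|\}$.

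The main obstacle I expect is step (2): one must verify that the Poincar\'e return time and map are genuinely the \emph{composition} of the $N$ slice maps, i.e. that an orbit entering $V_r\cap\Sigma_0$ passes transversally through each intermediate hyperplane $\{x_1=k/N\}$, exits each slice exactly once, and never leaves $V_r$ before reaching $\Sigma'_0$. Since $\dot x_1\equiv 1$ throughout (the perturbation never touches $\dot x_1$), monotonicity of $x_1$ handles the transversality and the "exactly once" claim automatically; the real content is the quantitative confinement in the transverse directions, which forces the choice of $r$ depending on $N$ and on $\max_k|\alpha_k|$ — and one must check this $r$ can be taken uniform over the finite family, which it can since there are only finitely many $\alpha_k$. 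The conjugation bookkeeping ($\Phi(P_k)$ fixing $x_1,y_1$, being Poisson, and the chain-rule constants) is routine but must be stated carefully enough that the constant $c$ in (3) is seen to be independent of $N$ and of the $A_k$.
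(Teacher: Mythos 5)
Your proposal follows the paper's proof essentially verbatim: the same slicing of the flowbox into $N$ sub-boxes with intermediate transversals at $x_1=(k-1)/N$, the same application of Lemma~\ref{lemma:PH=R} on each slice after conjugating by $\Phi(P_k)$ and rescaling by $T_k$ (using $\varphi^t_{f\circ P}=P^{-1}\circ\varphi^t_f\circ P$), the same composition of the slice Poincar\'e maps, and the same chain-rule bookkeeping for the $C^2$ bound. If anything, you are more explicit than the paper about the confinement of orbits in $V_r$ across all $N$ slices and the essential disjointness of the supports of the summands.
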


\begin{proof}
For each $k=1,\dots, N+1$ let 
$$
\Sigma_k=\left\{(x,y,0)\in\Rr^{2d+n}\colon x_1=\frac {k-1}{N}, y_{1}=0\right\}.
$$
If $k-1 \leq Nx_1\leq k$ with $1\leq k\leq N$, then $H=H_0+N H_{i(k)}\circ \Phi(P_k)\circ T_k$.
So, using Lemma~\ref{lemma:PH=R} and taking $r$ sufficiently small, the Poincaré map of $H$ between the transversals to $\Gamma_0$ given by $P_H\colon\Sigma_{k}\to\Sigma_{k+1}$ is $P_H=A_k$.
Recall that for any $f\in C^2(\Rr^{2d+n})$ we have that $\varphi_{f\circ P}^t=P^{-1}\circ\varphi_f^t\circ P$ where $P$ is a linear map.

The Poincar\'e map for the transversals at the edges of $\Gamma_0$, $P_H\colon\Sigma_0\to\Sigma_{N+1}=\Sigma_0'$, is the composition of the above maps.
That is, $P_H=A_{N}\cdots A_1$ on $V_r\cap\Sigma_0$.

Finally, the norm can be estimated also by using Lemma~\ref{lemma:PH=R}, 
$$\|H-H_0\|_ {C^2} \leq c \, \|\ell\|_{C^2} N^2 \max_{k=1,\dots,N}\{(\max\{1,\|P_k\|^2\})|\alpha_k|\},$$ for some constant $c>0$.
\end{proof}

Considering Lemmas~\ref{lemma:decomposition symp},~\ref{lemma:PH=R} and~\ref{lemma:combination}, in order to complete the proof of Proposition~\ref{local Franks Ham} it remains to show that $H$ can be taken equal to $H_0$ outside any ball of radius $\varrho>0$.
Define the $\varrho$-open ball $B_\varrho\subset\Rr^{2d+2+n}$ around the origin.
Consider the Hamiltonian $\widetilde H=\varrho H\circ \psi$ with $\psi(x,y)=(\varrho^{-1} x,\varrho^{-1} y)$.
Then $X_{\widetilde H}=X_H\circ\psi$ implies that $\varphi_{\widetilde H}^t=\varphi_H^{t/\varrho}$.
So, up to a time change, the dynamics are the same.


\subsection{Poisson Flowbox coordinates}

For a manifold $M$ and a submanifold $N$, we will denote the annihilator of $TN$ inside $T^\ast M$  by 
$$
TN^\circ=\{\xi\in T^\ast M \colon \xi(v)=0, v\in  TN\}.
$$

Let $(M,\pi)$ be a Poisson manifold. For a given $H\in C^s(M)$ we define the corresponding Hamiltonian vector field as 
$$
X_H=\{\cdot,H\}=\pi(\cdot,d H).
$$
In particular, if $M=\Rr^{2d+n}$ with coordinates $(y_1,...,y_{2d+n})$ and the standard Poisson structure 
$$
\pi_0=\sum_{i=1}^d \frac{\partial}{\partial y_i}\wedge\frac{\partial}{\partial y_{d+i}},
$$
then $H_0(y)=y_{d+1}$ yields $X_{H_0}=\frac{\partial}{\partial y_1}$.

The following result is the version of a straightening theorem in the Poisson context, (cf.~\cite{AM,BLD2008} for the symplectic case).

\begin{theorem}[Poisson flowbox coordinates]\label{thm:flowbox}
Let $(M^{2d+n},\pi)$ be a $C^s$-Poisson manifold, a Hamiltonian $H\in C^s(M,\Rr)$, $s\ge 2$ or $s=\infty$, and $x\in M$ such that the rank of $\pi$ is constant in a neighborhood of $x$. 
If $X_H(x)\neq 0$, there exist a neighborhood $U\subset M$ of $x$ and a local $C^{s-1}$-Poisson diffeomorphism $g\colon (U,\pi)\to(\Rr^{2d+n},\pi_0)$ such that $H=H_0\circ g$ on $U$.
\end{theorem}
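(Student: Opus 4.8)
The plan is to combine the classical (non-Poisson) flowbox theorem for $X_H$ with the Poisson splitting theorem, taking care that the two adapted coordinate systems can be chosen compatibly. First I would invoke the splitting theorem at $x$: since the rank of $\pi$ is constant near $x$, there is a local $C^{s-1}$ chart in which $\pi$ becomes $\pi_0 = \sum_{i=1}^d \partial_{y_i}\wedge\partial_{y_{d+i}}$, so we may as well assume from the start that $(M,\pi)=(\Rr^{2d+n},\pi_0)$ near the origin, with $x=0$ and $X_H(0)\neq 0$. In these coordinates $X_H = \sharp_{\pi_0}(dH)$ is tangent to the symplectic leaves $\{z = \mathrm{const}\}$ (here $z=(y_{2d+1},\dots,y_{2d+n})$), because $\sharp_{\pi_0}$ annihilates $dz_k$; so $X_H$ restricts, leafwise, to a Hamiltonian vector field of $H$ on the standard symplectic $\Rr^{2d}$, and $dH(x)\neq0$ since $X_H(x)\neq0$ and $\sharp_{\pi_0}$ is injective on the leaf conormals' complement.

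Next I would straighten $X_H$ while respecting the foliation. The key point is to build a Poisson diffeomorphism $g$ with $g_*X_H = \partial/\partial y_1$ and $H\circ g^{-1} = y_{d+1}$. I would do this leafwise with smooth dependence on $z$: on each leaf $\{z=c\}$, $H(\cdot,c)$ has nonvanishing differential near the origin, so by the symplectic straightening theorem (cf.~\cite{AM,BLD2008}, which one may apply with parameters) there is a symplectomorphism of $(\Rr^{2d},\omega_0)$, depending $C^{s-1}$ on $c$, sending $H(\cdot,c)$ to $y_{d+1}$ and $X_{H(\cdot,c)}$ to $\partial_{y_1}$. Concretely one completes $H(\cdot,c)$ to a Darboux-type coordinate system: pick a local hypersurface transversal to $X_H$ inside the leaf, use $H$ as one coordinate, flow-time along $X_H$ as its conjugate, and Darboux coordinates on the transversal for the remaining $2d-2$ directions. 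Declaring the $z_k$ to be unchanged assembles these leafwise maps into a single $C^{s-1}$ map $g$ of $\Rr^{2d+n}$; since it is symplectic on each leaf and fixes the transverse coordinates, it is Poisson for $\pi_0$. By construction $H = H_0\circ g$ and we are done after composing with the splitting chart.

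The main obstacle I expect is the smooth (indeed $C^{s-1}$) dependence on the transverse parameter $z$ when straightening $X_H$ leafwise: the symplectic flowbox/Darboux construction involves choosing transversal slices, integrating the flow of $X_H$, and applying Darboux's theorem, each of which must be carried out with uniform control and parameters, losing exactly one derivative. A clean way to organize this is to prove a single parametrized straightening statement — straightening the vector field $X_H$ on $\Rr^{2d}$ together with the closed $2$-form $\omega_0$ simultaneously, with a parameter — rather than handling foliation and symplectic form separately; this also automatically yields that the transverse coordinates can be kept fixed. One should also check that $dH$ not vanishing on the full space (equivalently $X_H\neq0$) forces $dH$ restricted to each nearby leaf to be nonzero, so the leafwise construction does not degenerate; this follows because $X_H=\sharp_{\pi_0}(dH)$ and $\ker\sharp_{\pi_0}$ is spanned by the $dz_k$, which are not in the image of $d$ of the leafwise coordinate $H$ unless $H$ is leafwise constant, contradicting $X_H\neq0$.
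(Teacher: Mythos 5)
Your plan is correct and rests on the same underlying mechanism as the paper's proof (straighten the flow, pair $H$ with a conjugate ``time'' function, reduce to a codimension-two transversal, apply a Darboux-type theorem there, and extend along the flows), but it is organized differently. You apply the ambient splitting theorem \emph{first}, reducing to $(\Rr^{2d+n},\pi_0)$, and then treat everything as a symplectic flowbox theorem with the transverse coordinate $z$ as a parameter; the assembled map is Poisson because it is leafwise symplectic and fixes $z$, as you correctly observe. The paper never splits the ambient manifold: it straightens $X_H$ to produce a function $G=q_1$ with $\{H,G\}=1$, shows that $\Sigma_e=\{G=0\}\cap H^{-1}(e)$ is a Poisson--Dirac submanifold carrying an induced Poisson structure, applies the splitting theorem to $\Sigma_e$ (of rank $d-1$), and then extends the resulting coordinates to $U$ by the commuting flows $\phi_H^t$ and $\phi_G^t$, giving an explicit formula for $g$ and $g^{-1}$ and verifying $g_\ast\pi=\pi_0$ by direct computation of brackets. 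Your route buys conceptual economy --- it reduces the statement to a standard symplectic fact quoted from the literature --- but it pushes the real work into the parametrized versions of that fact: you must check that the transversal slice, the flow-time coordinate, and the Darboux chart on the $(2d-2)$-dimensional reduced transversal can all be chosen with $C^{s-1}$ dependence on $c$, and that the resulting leafwise chart is genuinely Darboux (i.e.\ that $H$ and the flow time are symplectically conjugate and Poisson-commute with the transversal coordinates). These are exactly the verifications the paper carries out explicitly in its computation of $g^\ast dy_1$, $g^\ast dy_{d+1}$ and $g_\ast\pi$ on $\Sigma_e$; your family of reduced transversals, fibered over $c$, is precisely the paper's $\Sigma_e$, and your parametrized Darboux theorem is the splitting theorem applied to its induced Poisson structure. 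So the two proofs are interchangeable in content, with yours trading the Poisson--Dirac machinery for a parameters-everywhere bookkeeping that should be spelled out if the argument is to be self-contained.
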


\begin{proof}
Fix $e=H(x)$. 
Since $X_H(x)\neq 0$ one can find a coordinate patch $(U,(q_1,...,q_{2d+n}))$ centered at $x$, such that $X_H=\frac{\partial}{\partial q_1}$. In the neighborhood $U$ we have:
\[ \{H,q_1\}=\pi(d H,d q_1)=X_H(q_1)=\frac{\partial q_1}{\partial q_1}=1\]
We will denote $q_1$ by $G$ and the neighborhood $U$ will be allowed to remain as small as needed. For small enough $U$ one can define the transversal $\Sigma$ at point $x$ by 
\[\Sigma=G^{-1}(0)\cap U\]
which is a $C^s$ regular connected submanifold of dimension $2d+n-1$. Notice that $\{H,G\}=1$  holds in $U$. 

Locally there is a $C^s$ regular $(2d+n-2)$-dimensional hypersurface of $H^{-1}(e)$ where $H$ and $G$ are both constant: $\Sigma_e=\Sigma\cap H^{-1}(e)$. Notice that for $m\in\Sigma_e$,
\[T_m\Sigma_e=\{v\in T_M\colon  d H(v)(m)=d G(v)(m)=0\},\]
since $d H(X_G)(m)=-d G(X_H)(m)=1$, we have $X_G(m),X_H(m)\notin T_m\Sigma_e$ and 
\[T_mM=T_m\Sigma_e\oplus\real X_H\oplus \Rr X_G.\]
Also,
\[T^\ast_mM=T^\ast_m\Sigma_e\oplus\real d H(m)\oplus\real d G(m).\]

Consider the pointwise linear map $\pi^\sharp \colon T^\ast M\to TM$ given by 
$$
\xi(m)\mapsto\pi(m)(\xi(m),\cdot).
$$
Since the rank of $\pi$ is constant in the neighborhood around $x$ where $\Sigma_e$ is defined, 
showing that
\begin{equation}\label{pdcondition}
\pi^\sharp(TN^\circ)\cap TN=\{0\}
\end{equation}
implies that $\Sigma_e$ is a Poisson-Dirac submanifold of $M$ (cf.~\cite[\S 8]{CF04}). 
Poisson-Dirac submanifolds have a canonically induced  Poisson structure. 
In the language of Poisson brackets one can compute the induced Poisson structure $\pi_e$ on the submanifold $\Sigma_e$ as follows:
\[\pi_e( d F_1, d F_2)=\{F_1,F_2\}_e:=\{\hat{F_1},\hat{F_2}\}|_{\scriptstyle{\Sigma_e}},\]
where $\hat{F_i}\in C^\infty(M)$ are extensions of $F_i\in C^\infty(\Sigma_e)$ such that $ d F_i|_{\scriptstyle{\pi^\sharp(TN^\circ)}}=0$, $i=1,2$. 

Lets check that~\eqref{pdcondition} holds for $\Sigma_e$. Notice that by elementary linear algebra $T\Sigma_e^\circ$ is two dimensional. Furthermore, $H$ and $G$ are constant on $\Sigma_e$, i.e. $ d H,  d G\in T\Sigma_e^\circ$. 
Moreover, $ d H$ and $ d G$ are independent in $U$ so $T\Sigma_e^\circ=\real d H\oplus\real d G$. 
The definition of a Hamiltonian vector field gives us
\[\pi^\sharp( d H)=X_H
\quad
\text{and}
\quad
\pi^\sharp( d G)=X_G.\] 
It is now clear that~\eqref{pdcondition} holds. 

The corollary of Weinstein's splitting theorem for constant rank Poisson structures~\cite[Theorem 1.26]{poissonbook} assures us the existence of a local diffeomorphism $h\colon\Sigma_e\to \real^{2d+n-2}$ such that 
\begin{equation}\label{darboux}
h_\ast\pi_e=\pi_0^\prime
\quad
\text{where}
\quad
\pi_0^\prime=\sum_{i=2}^d\frac{\partial}{\partial y_i}\wedge\frac{\partial}{\partial y_{d+i}}.
\end{equation}
The next step is to extend the above Poisson coordinates from $\Sigma_e$ to $U$. 
For this purpose we use the parametrization by the flows $\phi^t_H$ and $\phi^t_G$ generated by
$X_H$ and $X_G$, respectively. 

Consider the function $G\circ\phi_H\colon U\times\real\to\real$, $(m,t)\mapsto\phi_H^t(m)$.
As $G\circ\phi_H^0(x)=0$ and 
\[\frac{ d}{ d t}G\circ\phi^t_H(x)|_{t=0}= d G(X_H)(x)=\{G,H\}=-1\neq0,\]
by the implicit function theorem we know that for $U$ small enough, there exist a unique $\tau\in C^{s-1}(U,\real)$ such that $G\circ\phi_H^{\tau(m)}=0$, i.e. $\phi_H^{\tau(m)}\in\Sigma$ for each $m\in U$. 
Moreover, $\phi_G^t$ preserves the level set of $G$ and 
\[\frac{ d}{ d t}H\circ\phi_G^t=\{H,G\}=1.\]
Thus, $H\circ\phi^t_G(m)=H(m)+t$ and in particular $H\circ\phi_G^{e-H(m)} (m)=e$. Hence, $\phi_G^{e-H(m)} (m)\in H^{-1}(e)$ for each $m\in U$.

So, we define $g\colon U\to \real^{2d+n}$ given by
\begin{equation}
\begin{split}
g(m) =& \left(-\tau(m),h_1\circ\phi_G^{e-H(m)}\circ\phi_H^{\tau(m)}(m),H(m),  \right. \\
& \left. h_2\circ\phi_G^{e-H(m)}\circ\phi_H^{\tau(m)}(m),h_3\circ\phi_G^{e-H(m)}\circ\phi_H^{\tau(m)}(m)\right),
\end{split}
\end{equation}
where $h=(h_1,h_2,h_3)$ as in~\eqref{darboux}, $h_i\colon\Sigma_e\to \real^{d-1}$, $i=1,2$, and $h_3\colon\Sigma\to  \real^n$. 
Clearly, $H_0\circ g=H$. 
The proof will be complete as soon as we show that $g$ is a $C^{s-1}$ Poisson diffeomorphism.  

It follows that $g$ is $C^{s-1}$ with inverse $g^{-1}\colon g(U)\to   U$,
\begin{equation}\label{inverse}
g^{-1}(y)=\phi_H^{y_1}\circ\phi_G^{y_{d+1}-e}\circ h^{-1}(\hat{y}),
\end{equation}
where $\hat{y}=(y_2,...,y_d,y_{d+2},...,y_{2d+n})$.
In addition, for $y\in g(U)$, 
\begin{align}\label{pushforward1}
g_\ast^{-1}X_{H_0}(y)&=\dot{\phi}_H^{y_1}\circ\phi^{y_{d+1}-e}_G\circ h^{-1}(\hat{y})\notag\\
&=X_H\circ\phi^{y_1}_H\circ\phi_G^{y_{d+1}-e}\circ h^{-1}(\hat{y})\\
&=X_H\circ g^{-1}(y).\notag
\end{align}
Equivalently, $g_\ast X_H=H_{H_0}$. 
Furthermore, notice that the map $F\mapsto X_F$ from $C^s(M)$ to the set of $C^{s-1}$ vector fields $\mathfrak{X}^{s-1}(M)$ is a Poisson map, i.e. 
$$
\{F_1,F_2\}\mapsto X_{\{F_1,F_2\}}=[X_{F_1},X_{F_2}],
\quad
F_1,F_2\in C^S(M),
$$
cf.~\cite[{Proposition 1.4}]{poissonbook}. 
In $U$, we have $\{H,G\}=1$ thus $[X_H,H_G]=X_{\{H,G\}}=0$. 
This means that $\phi_H^{t_1}\circ\phi^{t_2}_G=\phi^{t_2}_G\circ\phi_H^{t_1}$.  
Using this fact and  
\begin{align}\label{pushforward2}
g_\ast^{-1}\frac{\partial}{\partial y_{d+1}}(y)&=\dot{\phi}^{y_{d+1}-e}_G\circ\phi_H^{y_1}\circ h^{-1}(\hat{y})\notag\\
&=X_G\circ\phi_G^{y_{d+1}-e}\circ\phi^{y_1}_H\circ h^{-1}(\hat{y})\\
&=X_G\circ g^{-1}(y),\notag
\end{align}
which is a similar calculation as in~\eqref{pushforward1}, we obtain $g_\ast(X_G)=\frac{\partial}{\partial y_{d+1}}$.

Notice that on $\Sigma_e$ we have
\begin{align*}
&(g^\ast  d y_1)(X_H)= d y_1(g_\ast  X_H)= d y_1(X_{H_0})=1\\
&(g^\ast  d y_1)(X_G)= d y_1(g_\ast  X_G)= d y_1(\frac{\partial}{\partial y_{d+1}})=0\\
&(g^\ast  d y_1)(v)= d y_1(g_\ast  v)= d y_1(h_\ast v)=0,
\quad v\in T_m\Sigma_e.
 \end{align*}
On the other hand,
\begin{align*}&- d G(m)(X_H)=-\{G,H\}(m)=1,\\
&- d G(m)(X_G)=-\{G,G\}(m)=0,\\
&- d G(m)(v)=0,\qquad v\in T_m\Sigma_e,
\end{align*}
by elementary linear algebra.
Then, $g^\ast  d y_1(m)=- d G(m)$ for each $m\in T_e\Sigma$. 
Similarly, $g^\ast dy_{d+1}(m)= d H(m)$. We also have 
$g^\ast d y_j(m)=h^\ast d y_j(m)\in T_m^\ast\Sigma_e$ for every $j\notin\{1,d+1\}$. 
Furthermore, taking in addition $k\notin\{1,d+1\}$,
\begin{align*}
&(g_\ast\pi)( d y_j, d y_k)=(h_\ast)\pi( d y_j, d y_k)=\pi_0( d y_j, d y_k),\\
&(g_\ast\pi)( d y_1, d y_{d+1})=\pi(g^\ast d y_1,g^\ast d y_{d+1})=\pi(- d G, d H)=-\{G,H\}=1,\\
&(g^\ast\pi)( d y_1, d y_j)=\pi(- d G,g^\ast( d y_j))=X_G(g^\ast( d y_j))=0,\\
&(g^\ast\pi)( d y_{d+1}, d y_j)=\pi( d H,g^\ast( d y_j))=-X_H(g^\ast( d y_j))=0.
\end{align*}
Therefore, $g_\ast(\pi)$  has to be the canonical Poisson structure $\pi_0$, i.e. $g_\ast\pi=\pi_0$ on $\Sigma_e$.

The inverse of $g$ in~\eqref{inverse} shows that every point in $U$ can be reached by the flows $\phi_G$ and $\phi_H$, consecutively. Fix a point $m\in\Sigma_e$ and $t_1,t_2\in\real$ such that $m^\prime=\phi_H^{t_2}\circ\phi_G^{t_1}(m)\in U$. We will restrict to a small neighborhood around $m$ in which $\phi^{t_2}_H\circ\phi^{t_1}_G$ does not take us outside $U$. Now, 
$$
G(\phi^{\tau(m)-t_2}_H(m^\prime))=G(\phi_G^{t_1}\circ\phi^{\tau(m)}_H(m))=G(\phi^\tau_H(m))=0,
$$
where we used the fact that $X_H$ and $X_G$ commute and $G$ is constant along orbits of $X_G$. 
Thus,
\begin{equation}\label{tau1}
\tau(m^\prime)=\tau(m)-t_2.
\end{equation}
Furthermore,
\begin{equation}\label{calcu1}
H(\phi_H^{t_2}\circ\phi_G^{t_1}(m))=H(\phi_G^{t_1}(m))=H(m)+t_1
\end{equation}
using the fact that $\{H,G\}=1$.
By the definition of $g$,~\eqref{tau1} and~\eqref{calcu1}, we get
\begin{equation}\label{calcu2}
g(m^\prime)=g(\phi_H^{t_2}\circ\phi_G^{t_1}(m))=g(m)+(t_2,0,\dots,0,t_1,0,\dots,0).
\end{equation}
 Notice that the flow of a Hamiltonian vector field, for any fixed time, is a Poisson map, so both $\phi_H^{t_2},\phi_G^{t_1}$ are Poisson maps in $(M,\pi)$.
In addition, translations in $(\real^{2d+n},\pi_0)$ are Poisson.  
Finally,~\eqref{calcu2} together with $g_\ast\pi=\pi_0$ on $\Sigma$ completes the proof.
\end{proof}

\subsection{Proof of Theorem~\ref{thm:Franks_symp}}

By Theorem~\ref{thm:flowbox} we can locally reduce the problem to the case of Proposition~\ref{local Franks Ham}.

\section*{Acknowledgements}

The authors were supported by Funda\c c\~ao para a Ci\^encia e a Tecnologia through the Program POCI 2010 and the Project ``Randomness in Deterministic Dynamical Systems and Applications'' (PTDC-MAT-105448-2008).


\begin{bibdiv}
\begin{biblist}

\bib{AM}{book}{
	    AUTHOR = {Abraham, R.},
		    AUTHOR = {Marsden, J. E.},
	     TITLE = {Foundations of Mechanics},
	 PUBLISHER = {Benjamin Cummings Publishing},
	      YEAR = {1978},
	}
	
	\bib{arnaud}{article}{
	    AUTHOR = {Arnaud, M.-C.},
	     TITLE = {The generic symplectic {$C^1$}-diffeomorphisms of four-dimensional symplectic manifolds are hyperbolic, partially hyperbolic or have a completely elliptic periodic point},
	   JOURNAL = {Ergod. Th. Dynam. Sys.},
	    VOLUME = {22},
	      YEAR = {2002},
	     PAGES = {1621--1639},
	}

	\bib{BLD2008}{article}{
	    AUTHOR = {Bessa, M.},
        AUTHOR = {Lopes Dias, J.},
	     TITLE = {Generic dynamics of 4-dimensional {$C^2$} {H}amiltonian
	              systems},
	   JOURNAL = {Comm. Math. Phys.},
	  FJOURNAL = {Communications in Mathematical Physics},
	    VOLUME = {281},
	      YEAR = {2008},
	     PAGES = {597--619},
	}
	
	\bib{BR2008}{article}{
	    AUTHOR = {Bessa, M.},
        AUTHOR = {Rocha, J.},
	     TITLE = {On {$C^1$}-robust transitivity of volume-preserving flows},
	   JOURNAL = {J. Differential Equations},
	  FJOURNAL = {Journal of Differential Equations},
	    VOLUME = {245},
	      YEAR = {2008},
	     PAGES = {3127--3143},
	}

	\bib{MR1643501}{article}{
   author={Bogoyavlenskij, O.I.},
   title={Extended integrability and bi-Hamiltonian systems},
   journal={Comm. Math. Phys.},
   volume={196},
   date={1998},
   pages={19--51},
}

	\bib{bdp2003}{article}{
		    AUTHOR = {Bonatti, C.},
		    AUTHOR = {D{\'{\i}}az, L. J.},
			AUTHOR = {Pujals, E. R.},
		     TITLE = {A {$C^1$}-generic dichotomy for diffeomorphisms: weak forms
		              of hyperbolicity or infinitely many sinks or sources},
		   JOURNAL = {Ann. of Math.},
		  FJOURNAL = {Annals of Mathematics. Second Series},
		    VOLUME = {158},
		      YEAR = {2003},
		     PAGES = {355--418},
		}

		\bib{bgv2006}{article}{
			    AUTHOR = {Bonatti, C.},
			    AUTHOR = {Gourmelon, N.},
				  AUTHOR = {Vivier, T.},
			     TITLE = {Perturbations of the derivative along periodic orbits},
			   JOURNAL = {Ergod. Th. Dynam. Sys.},
			    VOLUME = {26},
			      YEAR = {2006},
			     PAGES = {1307--1337},
			}

		\bib{Contreras2010}{article}{
		   author={Contreras, G.},
		   title={Geodesic flows with positive topological entropy, twist maps and hyperbolicity},
		   journal={Ann. Math.},
		   volume={172},
		   date={2010},
		   pages={761--808},
		}

		\bib{ContrerasPaternain2002}{article}{
		   author={Contreras, G.},
		   author={Paternain, G.},
		   title={Genericity of geodesic flows with positive topological entropy on {$S^2$}},
		   journal={J. Diff. Geom.},
		   volume={61},
		   date={2002},
		   pages={1--49},
		}

     \bib{CF04}{article}{
	   author={Crainic, M.},
	   author={Loja Fernandes, R.},
	   title={Integrability of Poisson brackets},
	   journal={J. Differential Geom.},
	   volume={66},
	   date={2004},
	   pages={71--137},
	   issn={0022-040X},
	}

\bib{Franks}{article}{
   author={Franks, J.},
   title={Necessary conditions for stability of diffeomorphisms},
   journal={Trans. Amer. Math. Soc.},
   volume={158},
   date={1971},
   pages={301--308},
   issn={0002-9947},
}

\bib{MR1855353}{article}{
   author={Hern{\'a}ndez-Bermejo, B.},
   title={New solutions of the Jacobi equations for three-dimensional
   Poisson structures},
   journal={J. Math. Phys.},
   volume={42},
   date={2001},
   pages={4984--4996},
}

\bib{HA2006}{article}{
   author={Horita, V.},
   author={Tahzibi, A.},
   title={Partial hyperbolicity for symplectic diffeomorphisms},
   journal={Ann. I. H. Poincar\'e - AN},
   volume={23},
   date={2006},
   pages={641--661},
}

\bib{poissonbook}{book}{
	   author={Laurent-Gengoux, C.},
	   author={Pichereau, A.},
	   author={Vanhaecke, P.},
	   title={Poisson structures},
	   publisher={Springer},
	   date={2013},
	   pages={xxiv+461},
	}

\bib{lee}{book}{
   author={Lee, J. M.},
   title={Introduction to smooth manifolds},
   series={},
   volume={},
   publisher={Springer},
   place={},
   date={2000},
   pages={},
   isbn={},
	}

\bib{libermann}{book}{
   author={Libermann, P.},
   author={Marle, C.-M.},
   title={Symplectic geometry and analytical mechanics},
   publisher={Reidel},
   date={1987},
}

	\bib{MR1672182}{article}{
   author={Martin, C.},
   title={The Poisson structure of the mean-field equations in the $\Phi^4$ theory},
   journal={Ann. Physics},
   volume={271},
   date={1999},
   pages={294--305},
}

\bib{mpp2004}{article}{
 author={Morales, C.},
 author={Pac\'\i fico, M. J.},
 author={Pujals, E.},
 title={Robust transitive singular sets for 3-flows are partially hyperbolic attractors or repellers},
 journal={Ann. Math.},
 volume={160},
 date={2004},
 pages={375--432},
}

\bib{MR1627532}{article}{
   author={Morrison, P. J.},
   title={Hamiltonian description of the ideal fluid},
   journal={Rev. Modern Phys.},
   volume={70},
   date={1998},
   pages={467--521},
}
	
\bib{moser}{article}{
	    AUTHOR = {Moser, J.},
	     TITLE = {On the volume elements on a manifold},
	   JOURNAL = {Trans. Amer. Math. Soc.},
	  FJOURNAL = {Transactions of the American Mathematical Society},
	    VOLUME = {120},
	      YEAR = {1965},
	     PAGES = {286--294},
	}
	
	\bib{MR1240056}{book}{
   author={Olver, P. J.},
   title={Applications of Lie groups to differential equations},
   publisher={Springer-Verlag},
   date={1993},
}

\bib{MR660413}{article}{
   author={Picard, G.},
   author={Johnston, T. W.},
   title={Instability cascades, Lotka-Volterra population equations, and
   Hamiltonian chaos},
   journal={Phys. Rev. Lett.},
   volume={48},
   date={1982},
   pages={1610--1613},
}

\bib{visscher2012}{book}{
 author={Visscher, D. A.},
 title={Franks' Lemma in Geometric Contexts},
 publisher={PhD dissertation},
 place={Northwestern University},
 date={2012},
}

\bib{vivier}{article}{
   author={Vivier T.},
   title={Robustly transitive $3$-dimensional regular energy surfaces are Anosov},
   journal={Preprint Dijon, 2005},
   volume={},
   date={},
   number={},
   pages={},
 }
\end{biblist}
\end{bibdiv}

\end{document}